 \tikzset{mynode/.style={draw,circle,inner sep=1pt,outer sep=0pt}}
\newtheorem{teo}{Theorem}[section]
\newtheorem{cor}[teo]{Corollary}
\newtheorem{lem}[teo]{Lemma}
\newtheorem{defi}[teo]{Definition}
\newtheorem{prop}[teo]{Proposition}
\newtheorem{remark}[teo]{Remark}
\dedicatory{}
\begin{document}

\title{STRICT MONADIC TOPOLOGY II: DESCENT FOR CLOSURE SPACES}

\author{George Janelidze}
\address[George Janelidze]{Department of Mathematics and Applied Mathematics, University of Cape Town, Rondebosch 7700, South Africa}
\thanks{}
\email{george.janelidze@uct.ac.za}

\author{Manuela Sobral}
\address[Manuela Sobral]{CMUC and Departamento de
Matem\'atica, Universidade de Coimbra, 3001--501 Coimbra,
Portugal}
\thanks{
Partially supported by the Centre for Mathematics of
the University of Coimbra -- UID/MAT/00324/2020}
\email{sobral@mat.uc.pt}

\keywords{closure space, descent morphism, effective descent morphism, closed map, open map}

\subjclass[2010]{54A05, 18C15, 18A20, 54C10}

\begin{abstract}

By a closure space we will mean a pair $(A,\mathcal{C})$, in which $A$ is a set and $\mathcal{C}$ a set of subsets of $A$ closed under arbitrary intersections. The purpose of this paper is to initiate a development of descent theory of closure spaces, with our main results being: (a) characterization of descent morphisms of closure spaces; (b) in the category of finite closure spaces every descent morphism is an effective descent morphism; (c) every surjective closed map and every surjective open map of closure spaces is an effective descent morphism.

\end{abstract}

\date{\today}

\maketitle

\section{Introduction}

By a \textit{closure space} we will mean a pair $(A,\mathcal{C})$, in which $A$ is a set and $\mathcal{C}$ a set of subsets of $A$ closed under arbitrary intersections; we will also write informally $\mathcal{C}=\mathcal{C}_A$ and $A=(A,\mathcal{C})=(A,\mathcal{C}_A)$. A closure space structure $\mathcal{C}$ on a set $A$ can be equivalently described as a closure operator on the power set $\mathrm{P}(A)$ of $A$ written as $X\mapsto\overline{X}$ (or, more precisely, as $X\mapsto\overline{X}^A$) and satisfying
$$X\subseteq X'\Rightarrow\overline{X}\subseteq\overline{X'},\,\,\,X\subseteq\overline{X},\,\,\,\overline{\overline{X}}=\overline{X}.$$
The relationship between these two types of structures is given by
$$\overline{X}=\bigcap_{X\subseteq A'\in\mathcal{C}}A'\,\,\,\text{and}\,\,\,X\in\mathcal{C}\Leftrightarrow X=\overline{X}.$$

Our reason of using this notion comes from what we called \textit{strict monadic topology} in \cite{[JS2020]}:

Indeed, for a monad $T$ on the category of sets and a $T$-algebra $A$, we can make $A$ a closure space by taking $\mathcal{C}_A$ to be set of all $T$-subalgebras of $A$ -- and then, conversely, every closure space is of this form for a suitably chosen monad.

The purpose of this paper is to initiate a development of descent theory of closure spaces, specifically to:
\begin{itemize}
	\item characterize descent morphisms (=pullback stable regular epimorphisms) of closure spaces (Proposition 2.10);
	\item prove that in the category of finite closure spaces every descent morphism is an effective descent morphism (Theorem 4.3);
	\item compare the above-mentioned result with what happens with finite topological spaces;
	\item prove that surjective closed maps and surjective open maps of closure spaces are always effective descent morphisms (Theorem 6.5).
\end{itemize}

The paper is organized as follows: we begin with (mostly known, maybe in slightly different contexts) auxiliary results on closure spaces in Section 2 and on general descent theory in Section 3, except that Section 2 also includes the above-mentioned Proposition 2.10; Sections 4-6 are devoted to other main results, and Section 7 to some additional remarks and open questions.

\section{Closure spaces}

We will consider the category $\mathbf{CLS}$ of closure spaces, where a morphism $\alpha:A\to B$ is a map $\alpha$ from $A$ to $B$ with
$$B'\in\mathcal{C}_B\Rightarrow\alpha^{-1}(B')\in\mathcal{C}_A.$$

It is easy to see that the underlying set functor $U:\mathbf{CLS}\to\mathbf{Sets}$ is \textit{topological} in the sense of categorical topology, which then easily gives the Propositions 2.1 and 2.2 below:

\begin{prop}	A diagram in $\mathbf{CLS}$ of the form
	$$\xymatrix{D\ar[d]_-{\pi_1}\ar[r]^{\pi_2}&A\ar[d]^\alpha\\E\ar[r]_-p&B}$$
	is a pullback diagram in $\mathbf{CLS}$ if and only if its $U$-image is a pullback diagram in $\mathbf{Sets}$ and $\mathcal{C}_D=\{\pi_1^{-1}(E')\cap\pi_2^{-1}(A')\mid E'\in\mathcal{C}_E\,\&\,A'\in\mathcal{C}_A\}$.\qed
\end{prop}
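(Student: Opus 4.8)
The plan is to prove the statement directly from the universal property of the pullback, using that the forgetful functor $U:\mathbf{CLS}\to\mathbf{Sets}$ is topological and hence preserves limits. Write $\mathcal{C}_0=\{\pi_1^{-1}(E')\cap\pi_2^{-1}(A')\mid E'\in\mathcal{C}_E\ \&\ A'\in\mathcal{C}_A\}$ for the candidate structure. A preliminary observation I would record is that $\mathcal{C}_0$ is itself a closure structure, i.e. closed under arbitrary intersections: since $\pi_i^{-1}$ commutes with intersections and $\mathcal{C}_E,\mathcal{C}_A$ are closed under arbitrary intersections, any intersection $\bigcap_k\bigl(\pi_1^{-1}(E_k')\cap\pi_2^{-1}(A_k')\bigr)$ equals $\pi_1^{-1}\!\bigl(\bigcap_k E_k'\bigr)\cap\pi_2^{-1}\!\bigl(\bigcap_k A_k'\bigr)$, which again lies in $\mathcal{C}_0$; note also that $D=\pi_1^{-1}(E)\cap\pi_2^{-1}(A)\in\mathcal{C}_0$ since the whole spaces are always closed. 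Thus $\mathcal{C}_0$ is precisely the initial (coarsest) structure on the set $D$ making $\pi_1$ and $\pi_2$ morphisms.

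For the ``if'' direction I would assume the $U$-image is a pullback in $\mathbf{Sets}$ and $\mathcal{C}_D=\mathcal{C}_0$, and verify the universal property in $\mathbf{CLS}$. Given a test cone consisting of morphisms $f:T\to E$ and $g:T\to A$ with $pf=\alpha g$, the pullback property in $\mathbf{Sets}$ yields a unique map of sets $h:T\to D$ with $\pi_1 h=f$ and $\pi_2 h=g$. The key computation is that for a typical closed set $\pi_1^{-1}(E')\cap\pi_2^{-1}(A')\in\mathcal{C}_D$ one has $h^{-1}\bigl(\pi_1^{-1}(E')\cap\pi_2^{-1}(A')\bigr)=f^{-1}(E')\cap g^{-1}(A')$, which lies in $\mathcal{C}_T$ because $f,g$ are morphisms and $\mathcal{C}_T$ is closed under intersection. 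Hence $h$ is a morphism, and its uniqueness as a morphism follows from the faithfulness of $U$.

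For the ``only if'' direction I would assume the square is a pullback in $\mathbf{CLS}$. That its $U$-image is a pullback in $\mathbf{Sets}$ is immediate from the fact that $U$, being topological, preserves limits. To identify the structure, I would argue by two inclusions. First, since the projections $\pi_1,\pi_2$ are morphisms, every $\pi_1^{-1}(E')$ and $\pi_2^{-1}(A')$ lies in $\mathcal{C}_D$, and closure under intersection gives $\mathcal{C}_0\subseteq\mathcal{C}_D$. Second, I would use the pullback itself as a test object: by the preliminary observation, $\pi_1,\pi_2$ are also morphisms out of the space $(D,\mathcal{C}_0)$ and still satisfy $p\pi_1=\alpha\pi_2$, so the universal property produces a mediating morphism $(D,\mathcal{C}_0)\to(D,\mathcal{C}_D)$ which, by faithfulness, must be the identity on underlying sets; that this identity is a morphism says exactly $\mathcal{C}_D\subseteq\mathcal{C}_0$. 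Combining the inclusions gives $\mathcal{C}_D=\mathcal{C}_0$.

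I do not expect a serious obstacle here, as the result is the standard description of limits relative to a topological functor; the only points requiring care are the reduction of an arbitrary intersection of preimages to a single set of the form $\pi_1^{-1}(E')\cap\pi_2^{-1}(A')$ (which rests on $\mathcal{C}_E$ and $\mathcal{C}_A$ being intersection-closed, the empty intersection yielding the whole space), and the slightly delicate minimality step in the converse direction, where the pullback is tested against the explicitly constructed space $(D,\mathcal{C}_0)$ to force $\mathcal{C}_D\subseteq\mathcal{C}_0$.
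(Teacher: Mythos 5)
Your argument is correct and is exactly the standard unpacking of the paper's own (omitted) justification, namely that $U$ is topological: you identify $\mathcal{C}_0$ as the initial structure for the source $(\pi_1,\pi_2)$, verify the universal property directly, and recover the converse by testing the pullback against $(D,\mathcal{C}_0)$. All the delicate points (closure of $\mathcal{C}_0$ under arbitrary intersections, the empty intersection giving $D$ itself, and faithfulness of $U$ for uniqueness) are handled properly, so nothing is missing.
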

We will, however, present the diagram above as
$$\xymatrix{E\times_BA\ar[d]_{\pi_1}\ar[r]^-{\pi_2}&A\ar[d]^\alpha\\E\ar[r]_-p&B}$$
informally identifying $E\times_BA$ with $\{(e,a)\in E\times A\mid p(e)=\alpha(a)\}$, and write
$$\mathcal{C}_{E\times_BA}=\{E'\times_BA'=\pi_1^{-1}(E)\cap\pi_2^{-1}(A)\mid E'\in\mathcal{C}_E\,\&\,A'\in\mathcal{C}_A\}.$$ \textit{We will refer to this diagram as the pullback diagram for} $(p,\alpha)$.
\begin{prop}	A diagram in $\mathbf{CLS}$ of the form
	$$\xymatrix{F\ar@<0.5ex>[r]^-{p_1}\ar@<-0.5ex>[r]_-{p_2}&E\ar[r]^p&B}$$
	is a coequalizer diagram in $\mathbf{CLS}$ if and only if its $U$-image is a coequalizer diagram in $\mathbf{Sets}$ and $\mathcal{C}_B=\{B'\subseteq B\mid p^{-1}(B')\in\mathcal{C}_E\}$.\qed
\end{prop}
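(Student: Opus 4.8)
The plan is to obtain Proposition 2.2 as the colimit counterpart of Proposition 2.1, exploiting that $U$ is topological: just as a limit object carries the initial (coarsest) structure, a colimit object carries the final (finest) structure for its defining cocone. Concretely, I would first isolate the one key lemma: for a set map $p\colon E\to B$ with $E$ a closure space, the collection $\mathcal{C}_B^{\mathrm{fin}}=\{B'\subseteq B\mid p^{-1}(B')\in\mathcal{C}_E\}$ is a closure structure on $B$ and is the final one, meaning that for every closure space $C$ and every set map $h\colon B\to C$ one has: $h$ is a morphism $(B,\mathcal{C}_B^{\mathrm{fin}})\to C$ if and only if $hp\colon E\to C$ is a morphism. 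That $\mathcal{C}_B^{\mathrm{fin}}$ is closed under arbitrary intersections is immediate from $p^{-1}(\bigcap_i B_i')=\bigcap_i p^{-1}(B_i')$ together with the intersection-closedness of $\mathcal{C}_E$, and the stated equivalence follows at once from the chain $h^{-1}(C')\in\mathcal{C}_B^{\mathrm{fin}}\Leftrightarrow p^{-1}(h^{-1}(C'))\in\mathcal{C}_E\Leftrightarrow(hp)^{-1}(C')\in\mathcal{C}_E$, quantified over $C'\in\mathcal{C}_C$.

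For the ``if'' direction, assume the $U$-image is a coequalizer in $\mathbf{Sets}$ (so $p$ is surjective and $pp_1=pp_2$ as set maps, hence as $\mathbf{CLS}$-morphisms by faithfulness of $U$) and that $\mathcal{C}_B=\mathcal{C}_B^{\mathrm{fin}}$. By the lemma $p$ itself is a morphism. Given any morphism $g\colon E\to C$ with $gp_1=gp_2$, the universal property in $\mathbf{Sets}$ yields a unique set map $h\colon B\to C$ with $hp=g$; the lemma promotes $h$ to a $\mathbf{CLS}$-morphism, and its uniqueness in $\mathbf{CLS}$ follows from faithfulness of $U$. Thus the diagram is a coequalizer in $\mathbf{CLS}$.

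For the ``only if'' direction, assume the diagram is a coequalizer in $\mathbf{CLS}$. Since $U$ is topological it has a right adjoint and therefore preserves colimits, so the $U$-image is a coequalizer in $\mathbf{Sets}$; in particular $p$ is surjective. Re-equipping the same underlying set $B$ with $\mathcal{C}_B^{\mathrm{fin}}$ while keeping the map $p$ produces, by the ``if'' direction just proved, another coequalizer of $p_1,p_2$ in $\mathbf{CLS}$. By uniqueness of coequalizers there is an isomorphism $\phi\colon(B,\mathcal{C}_B)\to(B,\mathcal{C}_B^{\mathrm{fin}})$ with $\phi p=p$; surjectivity of $p$ forces $\phi=\mathrm{id}_B$, and an isomorphism of closure spaces with identity underlying map must identify the two structures, giving $\mathcal{C}_B=\mathcal{C}_B^{\mathrm{fin}}$.

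The only point requiring genuine care---and what I would flag as the crux---is orienting the order correctly. Because morphisms are defined by \emph{pulling back} closed sets ($B'\in\mathcal{C}_B\Rightarrow\alpha^{-1}(B')\in\mathcal{C}_A$), enlarging $\mathcal{C}_B$ makes the quotient map $p$ harder to be a morphism while making maps out of $B$ easier; the final structure is thus the \emph{largest} collection of subsets of $B$ whose $p$-preimages are closed, namely $\mathcal{C}_B^{\mathrm{fin}}$, and the small but essential check is that this largest choice remains closed under arbitrary intersections so that it is a legitimate object of $\mathbf{CLS}$. Everything else is the standard behaviour of a topological functor, which is why the statement follows as easily as claimed.
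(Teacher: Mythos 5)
Your proof is correct and is essentially the paper's own argument: the paper gives no explicit proof, deriving Propositions 2.1 and 2.2 directly from the fact that $U:\mathbf{CLS}\to\mathbf{Sets}$ is topological, and your write-up is precisely the expansion of that remark (the final structure $\mathcal{C}_B^{\mathrm{fin}}$, its closure under intersections via $p^{-1}(\bigcap_i B_i')=\bigcap_i p^{-1}(B_i')$, and preservation of colimits by $U$). All the steps, including the reduction of uniqueness of the coequalizer structure to an isomorphism over a surjection, check out.
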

\begin{cor}
	A morphism $p:E\to B$ in $\mathbf{CLS}$ is a regular epimorphism if and only if $p$ is a surjective map with $\mathcal{C}_B=\{B'\subseteq B\mid p^{-1}(B')\in\mathcal{C}_E\}$.\qed
\end{cor}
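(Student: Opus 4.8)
The plan is to deduce this directly from Proposition 2.2, since a regular epimorphism is by definition a coequalizer of \emph{some} parallel pair, and the candidate set $\{B'\subseteq B\mid p^{-1}(B')\in\mathcal{C}_E\}$ depends only on $p$ and on $\mathcal{C}_E$ (not on any auxiliary pair). The two implications then fall out by reading Proposition 2.2 once in each direction.

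For the ``only if'' direction, suppose $p$ is a regular epimorphism, so that there is a pair $p_1,p_2:F\to E$ of which $p$ is the coequalizer. Applying Proposition 2.2 to this coequalizer diagram, I obtain that its $U$-image is a coequalizer in $\mathbf{Sets}$ --- whence $p$ is surjective, since coequalizers in $\mathbf{Sets}$ are exactly the surjections --- and simultaneously that $\mathcal{C}_B=\{B'\subseteq B\mid p^{-1}(B')\in\mathcal{C}_E\}$. This is precisely the asserted description.

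For the ``if'' direction, suppose $p$ is surjective and $\mathcal{C}_B$ has the stated form; I would exhibit $p$ as the coequalizer of its own kernel pair. By Proposition 2.1 the kernel pair $p_1,p_2:E\times_BE\to E$ exists in $\mathbf{CLS}$ and has underlying set the kernel pair of $p$ in $\mathbf{Sets}$. Since $p$ is a surjection, its $U$-image $E\times_BE\rightrightarrows E\to B$ is a coequalizer diagram in $\mathbf{Sets}$, because in $\mathbf{Sets}$ every surjection is the coequalizer of its kernel pair. The second hypothesis of Proposition 2.2, namely $\mathcal{C}_B=\{B'\subseteq B\mid p^{-1}(B')\in\mathcal{C}_E\}$, holds by assumption, so Proposition 2.2 yields that $E\times_BE\rightrightarrows E\to B$ is a coequalizer diagram in $\mathbf{CLS}$; in particular $p$ is a regular epimorphism.

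Since both implications are immediate consequences of Propositions 2.1 and 2.2 together with the elementary description of coequalizers in $\mathbf{Sets}$, I do not anticipate any genuine obstacle here. The only point requiring a moment's care is the observation that the closure-structure condition on $\mathcal{C}_B$ is intrinsic to $p$ alone: this is what lets me read it off from an arbitrary coequalizer presentation in the first direction and feed it into the kernel-pair presentation in the second.
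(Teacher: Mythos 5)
Your argument is correct and is exactly the derivation the paper intends: Corollary 2.3 is stated as an immediate consequence of Proposition 2.2 (reading it once in each direction, using the kernel pair for the converse), which is precisely what you spell out. No issues.
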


Most of what we present in the rest of this section either automatically extends what is known for topological spaces, or known itself, possibly as `folklore', or is presented in some form in \cite{[M2011]}:
\begin{prop}
	For closure spaces $E$ and $B$, and a map $p:E\to B$, the following conditions are equivalent:
	\begin{itemize}
		\item [(a)] $p:E\to B$ is a morphism in $\mathbf{CLS}$;
		\item [(b)] $\overline{p^{-1}(X)}\subseteq p^{-1}(\overline{X})$ for every $X\subseteq B$;
		\item [(c)] $p(\overline{p^{-1}(X)})\subseteq\overline{X}$ for every $X\subseteq B$;
		\item [(d)] $p(\overline{Y})\subseteq\overline{p(Y)}$ for every $Y\subseteq E$;
		\item [(e)] $\overline{Y}\subseteq p^{-1}(\overline{p(Y)})$ for every $Y\subseteq E$.
	\end{itemize}
\end{prop}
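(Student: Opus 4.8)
The plan is to prove the equivalence of these five conditions characterizing when a map between closure spaces is a morphism. These are the standard "continuity via closure operator" equivalences, familiar from topology, so I expect the whole argument to be routine chasing of definitions; the only genuinely substantive input is the relationship between the structure $\mathcal{C}_B$ and the closure operator $\overline{(\,\cdot\,)}$, namely $\overline{X}=\bigcap_{X\subseteq A'\in\mathcal{C}}A'$ and $X\in\mathcal{C}\Leftrightarrow X=\overline{X}$, together with monotonicity, extensivity, and idempotence of closure. Rather than prove all ten implications, I would establish a cycle together with a couple of cheap biconditionals.

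First I would prove $(a)\Rightarrow(b)$. Given $X\subseteq B$, the set $\overline{X}$ lies in $\mathcal{C}_B$, so by $(a)$ its preimage $p^{-1}(\overline{X})$ lies in $\mathcal{C}_A$, i.e. $p^{-1}(\overline{X})$ is closed. Since $p^{-1}(X)\subseteq p^{-1}(\overline{X})$ and the latter is closed, monotonicity and idempotence give $\overline{p^{-1}(X)}\subseteq p^{-1}(\overline{X})$, which is $(b)$. Next, $(b)\Leftrightarrow(c)$ is immediate upon applying $p$ to both sides and using the adjunction-like relation between $p$ and $p^{-1}$: condition $(b)$ says $\overline{p^{-1}(X)}\subseteq p^{-1}(\overline{X})$, and applying $p$ yields $p(\overline{p^{-1}(X)})\subseteq p(p^{-1}(\overline{X}))\subseteq\overline{X}$, giving $(c)$; conversely, from $(c)$ one re-internalizes by taking preimages, since $Z\subseteq p^{-1}(W)$ iff $p(Z)\subseteq W$ holds for $Z$ of the form $\overline{p^{-1}(X)}$. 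Similarly $(d)\Leftrightarrow(e)$ is the same formal manipulation applied to an arbitrary $Y\subseteq E$ in place of $p^{-1}(X)$.

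I would then close the cycle with $(c)\Rightarrow(d)$ and $(d)\Rightarrow(a)$. For $(c)\Rightarrow(d)$, set $X=p(Y)$; then $Y\subseteq p^{-1}(p(Y))=p^{-1}(X)$, so $\overline{Y}\subseteq\overline{p^{-1}(X)}$ by monotonicity, and applying $p$ together with $(c)$ gives $p(\overline{Y})\subseteq p(\overline{p^{-1}(X)})\subseteq\overline{X}=\overline{p(Y)}$, which is $(d)$. For $(d)\Rightarrow(a)$, take $B'\in\mathcal{C}_B$ and put $Y=p^{-1}(B')$; then $p(Y)\subseteq B'$, so $\overline{p(Y)}\subseteq\overline{B'}=B'$ using that $B'$ is closed, and $(d)$ yields $p(\overline{Y})\subseteq\overline{p(Y)}\subseteq B'$, whence $\overline{Y}\subseteq p^{-1}(B')=Y$; combined with extensivity $Y\subseteq\overline{Y}$ this gives $\overline{Y}=Y$, i.e. $p^{-1}(B')\in\mathcal{C}_A$, which is $(a)$.

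There is no real obstacle here; the mild point requiring care is keeping straight the two directions of the image/preimage Galois connection in $(b)\Leftrightarrow(c)$ and $(d)\Leftrightarrow(e)$, in particular the inclusions $p(p^{-1}(W))\subseteq W$ and $Y\subseteq p^{-1}(p(Y))$ (one of which can be strict when $p$ fails to be surjective or injective respectively), and making sure every application of the closure operator respects the membership $\overline{X}\in\mathcal{C}_B$ and the equivalence $X\in\mathcal{C}\Leftrightarrow X=\overline{X}$. Once these bookkeeping facts are in hand, the chain $(a)\Rightarrow(b)\Leftrightarrow(c)\Rightarrow(d)\Leftrightarrow(e)$ and $(d)\Rightarrow(a)$ closes the loop and establishes the full equivalence.
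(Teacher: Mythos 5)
Your proof is correct and uses essentially the same ingredients as the paper's (the image/preimage Galois connection, monotonicity, and the fact that $\overline{X}\in\mathcal{C}_B$); the paper merely arranges the implications slightly differently, proving (a)$\Leftrightarrow$(b), (b)$\Rightarrow$(e), and (d)$\Rightarrow$(c) rather than your cycle (a)$\Rightarrow$(b)$\Rightarrow$(c)$\Rightarrow$(d)$\Rightarrow$(a). The only blemish is notational: the domain space is $E$, so the preimages land in $\mathcal{C}_E$, not $\mathcal{C}_A$.
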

\begin{proof}
	(a)$\Rightarrow$(b): Since $X\subseteq\overline{X}$, we have $p^{-1}(X)\subseteq p^{-1}(\overline{X})$, and then $\overline{p^{-1}(X)}\subseteq\overline{p^{-1}(\overline{X})}$, but $\overline{p^{-1}(\overline{X})}=p^{-1}(\overline{X})$ by (a), since $\overline{X}\in\mathcal{C}_B$.
	
	(b)$\Rightarrow$(a): If $B'\in\mathcal{C}_B$, then $\overline{B'}=B'$ and (b) gives $\overline{p^{-1}(B')}\subseteq p^{-1}(B')$, making $\overline{p^{-1}(B')}=p^{-1}(B')$ and so making $\overline{p^{-1}(B')}\in\mathcal{C}_E$.
	
	(b)$\Leftrightarrow$(c) and (d)$\Leftrightarrow$(e) are obvious.
	
	(b)$\Rightarrow$(e): Since $Y\subseteq p^{-1}(p(Y))$, we have $\overline{Y}\subseteq\overline{p^{-1}(p(Y))}$, but $\overline{p^{-1}(p(Y))}\subseteq p^{-1}(\overline{p(Y)})$ by (b).
	
	(d)$\Rightarrow$(c): Since $p(p^{-1}(X))\subseteq X$, we have $\overline{p(p^{-1}(X))}\subseteq\overline{X}$, but $p(\overline{p^{-1}(X)})\subseteq\overline{p(p^{-1}(X))}$ by (d).
\end{proof}
\begin{prop}
	The following conditions on a morphism $p:E\to B$ in $\mathbf{CLS}$ are equivalent:
	\begin{itemize}
		\item [(a)] $p$ is closed, that is, $Y\in\mathcal{C}_E\Rightarrow p(Y)\in\mathcal{C}_B$;
		\item [(b)] $p(\overline{Y})\supseteq\overline{p(Y)}$ for every $Y\subseteq E$;
		\item [(c)] $p(\overline{Y})=\overline{p(Y)}$ for every $Y\subseteq E$.
	\end{itemize}
\end{prop}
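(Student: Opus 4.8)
The plan is to prove the equivalence (a) $\Leftrightarrow$ (b) $\Leftrightarrow$ (c) for a morphism $p \colon E \to B$, using throughout that $p$ being a morphism already gives us the inclusion $p(\overline{Y}) \subseteq \overline{p(Y)}$ from part (d) of Proposition 2.4. This is the crucial observation: since $p$ is assumed to be a morphism, one direction of the inclusion is free, so proving closedness reduces to establishing the reverse inclusion $p(\overline{Y}) \supseteq \overline{p(Y)}$. This is precisely why (b) and (c) differ only by an inclusion that is automatic, and the equivalence (b) $\Leftrightarrow$ (c) will be immediate.

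\medskip

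First I would prove (a) $\Rightarrow$ (b). Given $Y \subseteq E$, I want $\overline{p(Y)} \subseteq p(\overline{Y})$. Since $\overline{Y} \in \mathcal{C}_E$ and $p$ is closed by (a), the image $p(\overline{Y})$ lies in $\mathcal{C}_B$, so $p(\overline{Y})$ is closed. Because $Y \subseteq \overline{Y}$ we have $p(Y) \subseteq p(\overline{Y})$, and applying closure (monotonicity) gives $\overline{p(Y)} \subseteq \overline{p(\overline{Y})} = p(\overline{Y})$, where the last equality holds because $p(\overline{Y})$ is already closed. That yields (b).

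\medskip

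Next I would prove (b) $\Rightarrow$ (a). Suppose $Y \in \mathcal{C}_E$, so $\overline{Y} = Y$. I must show $p(Y) \in \mathcal{C}_B$, i.e. $p(Y) = \overline{p(Y)}$. The inclusion $p(Y) \subseteq \overline{p(Y)}$ is automatic. For the reverse, (b) gives $\overline{p(Y)} \subseteq p(\overline{Y}) = p(Y)$, using $\overline{Y} = Y$. Hence $p(Y) = \overline{p(Y)}$, so $p(Y)$ is closed, giving (a). Finally, for (b) $\Leftrightarrow$ (c), note that (c) trivially implies (b); and conversely, combining the given inclusion $p(\overline{Y}) \subseteq \overline{p(Y)}$ (valid since $p$ is a morphism) with the inclusion in (b) produces equality, yielding (c).

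\medskip

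I do not anticipate a genuine obstacle here, since everything follows from monotonicity and idempotence of the closure operator together with the free inclusion supplied by Proposition 2.4(d). The only subtle point worth flagging is that the equivalence (b) $\Leftrightarrow$ (c) relies essentially on $p$ being a \emph{morphism}, not merely an arbitrary map; without that hypothesis the inclusion $p(\overline{Y}) \subseteq \overline{p(Y)}$ could fail and (b) would not upgrade to (c).
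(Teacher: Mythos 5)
Your proposal is correct and follows essentially the same route as the paper: the (a)$\Rightarrow$(b) step via monotonicity and closedness of $p(\overline{Y})$ is identical, and the upgrade from (b) to (c) uses the same free inclusion $p(\overline{Y})\subseteq\overline{p(Y)}$ from Proposition 2.4(d). The only cosmetic difference is that you close the cycle via (b)$\Rightarrow$(a) directly, whereas the paper goes (b)$\Rightarrow$(c)$\Rightarrow$(a); the underlying computations are the same.
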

\begin{proof}
	(a)$\Rightarrow$(b): Since $Y\subseteq\overline{Y}$, we have $p(Y)\subseteq p(\overline{Y})$ and then $\overline{p(Y)}\subseteq\overline{p(\overline{Y})}$, but $\overline{p(\overline{Y})}=p(\overline{Y})$ by (a), since $\overline{Y}\in\mathcal{C}_E$.
	
	(b)$\Rightarrow$(c) follows from the implication (a)$\Rightarrow$(d) of Proposition 2.4.
	
	$(c)\Rightarrow$(a): If $Y=\overline{Y}$, then $p(Y)=\overline{p(Y)}$ by (c).
\end{proof}
\begin{prop}
	The following conditions on a morphism $p:E\to B$ in $\mathbf{CLS}$ are equivalent:
	\begin{itemize}
		\item [(a)] $p$ is open, that is, $-Y\in\mathcal{C}_E\Rightarrow-p(Y)\in\mathcal{C}_B$;
		\item [(b)] $\overline{X}\subseteq-p(-\overline{p^{-1}(X)})$ for every $X\subseteq B$;
		\item [(c)] $\overline{p^{-1}(X)}\supseteq p^{-1}(\overline{X})$ for every $X\subseteq B$;
		\item [(d)] $\overline{p^{-1}(X)}=p^{-1}(\overline{X})$ for every $X\subseteq B$.
	\end{itemize}
\end{prop}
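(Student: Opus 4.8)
The plan is to reduce everything to the right adjoint of the inverse-image map. For a map $p:E\to B$ and $Z\subseteq E$, write $p_*(Z):=-p(-Z)$; a direct check shows $p_*(Z)=\{b\in B\mid p^{-1}(b)\subseteq Z\}$, and that $p^{-1}\dashv p_*$ as maps of power sets, i.e. $p^{-1}(X)\subseteq Z\Leftrightarrow X\subseteq p_*(Z)$ for all $X\subseteq B$ and $Z\subseteq E$. In this language condition (a), openness, says exactly that $p_*$ carries $\mathcal{C}_E$ into $\mathcal{C}_B$ (take $Z=-Y$), and condition (b) reads $\overline{X}\subseteq p_*(\overline{p^{-1}(X)})$.

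First I would dispose of the two easy equivalences. The equivalence (c)$\Leftrightarrow$(d) is immediate: (d)$\Rightarrow$(c) trivially, and since $p$ is a morphism, Proposition 2.4(b) already gives $\overline{p^{-1}(X)}\subseteq p^{-1}(\overline{X})$, so the inclusion (c) upgrades to the equality (d). The equivalence (b)$\Leftrightarrow$(c) is purely set-theoretic: by the adjunction, $\overline{X}\subseteq p_*(\overline{p^{-1}(X)})$ holds iff $p^{-1}(\overline{X})\subseteq\overline{p^{-1}(X)}$, which is exactly (c).

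It remains to connect (a) with this cluster, which I would do by proving (a)$\Rightarrow$(b) and (d)$\Rightarrow$(a). For (a)$\Rightarrow$(b): the set $\overline{p^{-1}(X)}$ lies in $\mathcal{C}_E$, so openness gives $p_*(\overline{p^{-1}(X)})\in\mathcal{C}_B$; since $p^{-1}(X)\subseteq\overline{p^{-1}(X)}$, the adjunction gives $X\subseteq p_*(\overline{p^{-1}(X)})$, and as this set is closed and contains $X$ it contains $\overline{X}$, which is (b). For (d)$\Rightarrow$(a): given $Z\in\mathcal{C}_E$, set $X=p_*(Z)$; the counit gives $p^{-1}(X)\subseteq Z$, hence $\overline{p^{-1}(X)}\subseteq\overline{Z}=Z$, and (d) turns this into $p^{-1}(\overline{X})\subseteq Z$; the adjunction then yields $\overline{X}\subseteq p_*(Z)=X$, so $p_*(Z)=X$ is closed. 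As $Z$ ranges over $\mathcal{C}_E$ this is precisely openness.

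The only place demanding genuine care is the bookkeeping with complements and images that establishes $p_*(Z)=\{b\mid p^{-1}(b)\subseteq Z\}$ together with the adjunction $p^{-1}\dashv p_*$; once those are in hand, every implication above is a one-line formal consequence, so I expect no serious obstacle beyond keeping the directions of the inclusions straight and remembering that $p$ being a morphism is what supplies the missing half of (c)$\Leftrightarrow$(d).
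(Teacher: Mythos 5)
Your proof is correct, and every step checks out: the adjunction $p^{-1}(X)\subseteq Z\Leftrightarrow X\subseteq p_*(Z)$ with $p_*(Z)=-p(-Z)$ is verified by exactly the complement-pushing the paper does by hand, and your cycle (a)$\Rightarrow$(b)$\Leftrightarrow$(c)$\Leftrightarrow$(d)$\Rightarrow$(a) closes. The substance is the same as the paper's --- your (a)$\Rightarrow$(b) and your (b)$\Leftrightarrow$(c) are literally the paper's arguments with the chains of equivalences $X\subseteq -p(-Z)\Leftrightarrow\cdots\Leftrightarrow p^{-1}(X)\subseteq Z$ absorbed into the adjunction, and your (c)$\Leftrightarrow$(d) invokes Proposition 2.4 just as the paper does. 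Where you genuinely diverge is in how the loop is closed: the paper proves (b)$\Rightarrow$(a) directly by the substitution $X=-p(Y)$ and a short computation with double complements, whereas you prove (d)$\Rightarrow$(a) by taking $X=p_*(Z)$ for closed $Z$, using the counit $p^{-1}(p_*(Z))\subseteq Z$ and then the adjunction to conclude $\overline{X}\subseteq X$. Your version buys conceptual transparency --- openness becomes ``$p_*$ preserves closed sets'' and every inclusion is a one-line consequence of $p^{-1}\dashv p_*$, with no risk of losing a complement --- at the cost of having to set up and verify the adjunction first; the paper's version is self-contained and keeps the statement of (b) in the exact form in which it is quoted elsewhere. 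Either is a perfectly acceptable proof.
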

\begin{proof}
	(a)$\Rightarrow$(b): Since $\overline{p^{-1}(X)}\in\mathcal{C}_E$, we have $-p(-\overline{p^{-1}(X)})\in\mathcal{C}_B$ by (a). Therefore to deduce (c) is to show that $X\subseteq-p(-\overline{p^{-1}(X)})$, but we have
	$$X\subseteq-p(-\overline{p^{-1}(X)})\Leftrightarrow p(-\overline{p^{-1}(X)})\subseteq-X\Leftrightarrow-\overline{p^{-1}(X)}\subseteq p^{-1}(-X)$$
	$$\Leftrightarrow-\overline{p^{-1}(X)}\subseteq-p^{-1}(X)\Leftrightarrow p^{-1}(X)\subseteq\overline{p^{-1}(X)}.$$
	
	(b)$\Rightarrow$(a): Applying (b) to $X=-p(Y)$, we obtain the first inclusion in
	$$\overline{-p(Y)}\subseteq-p(-\overline{p^{-1}(-p(Y))})=-p(-\overline{-p^{-1}(p(Y))})\subseteq-p(-\overline{-Y}),$$
	and for $-Y\in\mathcal{C}_E$ this gives $\overline{-p(Y)}\subseteq-p(--Y)=-p(Y)$, which means that $-p(Y)\in\mathcal{C}_B$.
	
	(b)$\Leftrightarrow$(c): We have
	$$\overline{X}\subseteq-p(-\overline{p^{-1}(X)})\Leftrightarrow p(-\overline{p^{-1}(X)})\subseteq-\overline{X}\Leftrightarrow-\overline{p^{-1}(X)}\subseteq p^{-1}(-\overline{X})$$
	$$\Leftrightarrow-\overline{p^{-1}(X)}\subseteq-p^{-1}(\overline{X})\Leftrightarrow \overline{p^{-1}(X)}\supseteq p^{-1}(\overline{X}).$$
	
	(c)$\Leftrightarrow$(d) follows from the implication (a)$\Rightarrow$(b) of Proposition 2.4.
\end{proof}

For a morphism $p:E\to B$ in $\mathbf{CLS}$ and $X\subseteq B$, let us define $p_\infty(X)$ by transfinite induction as follows:
$$p_0(X)=X,\,\,\,p_{\lambda+1}(X)=p(\overline{p^{-1}(p_{\lambda}(X))})=p_1(p_\lambda(X)),$$
$$p_{\mu}(X)=\bigcup_{\lambda<\mu}p_{\lambda}(X)\,\text{(for a limit ordinal}\,\mu),\,\,\,p_\infty(X)=\bigcup_{\lambda}p_{\lambda}(X).$$
Note that, $p_1(X)\subseteq p(E)$ and, using transfinite induction, we conclude that $p_\infty(X)\subseteq p(E)$
for every $X\subseteq B$. Furthermore, when $p$ is surjective, we have $X\subseteq p_1(X)$, and so
$$\lambda\leqslant\mu\Rightarrow p_\lambda(X)\subseteq p_\mu(X)\,(\subseteq p_\infty(X)).$$
\begin{prop}
	The following conditions on a morphism $p:E\to B$ in $\mathbf{CLS}$ are equivalent:
	\begin{itemize}
		\item [(a)] $p$ is a regular epimorphism;
		\item [(b)] $\overline{X}\subseteq p_\infty(X)$ for every $X\subseteq B$;
		\item [(c)] $\overline{X}=p_\infty(X)$ for every $X\subseteq B$.
	\end{itemize}
\end{prop}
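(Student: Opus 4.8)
The plan is to reduce everything to Corollary 2.3, which says that $p$ is a regular epimorphism precisely when $p$ is surjective and $\mathcal{C}_B=\{B'\subseteq B\mid p^{-1}(B')\in\mathcal{C}_E\}$. First I would isolate the \emph{always-true} inclusion $p_\infty(X)\subseteq\overline{X}$, valid for every morphism $p$ and needing no epimorphism hypothesis, by proving $p_\lambda(X)\subseteq\overline{X}$ by transfinite induction on $\lambda$. The base and limit cases are immediate; the only substantive step is the successor case, where from $p_\lambda(X)\subseteq\overline{X}$ one gets $\overline{p^{-1}(p_\lambda(X))}\subseteq\overline{p^{-1}(\overline{X})}\subseteq p^{-1}(\overline{X})$ using Proposition 2.4(b) applied to $\overline{X}$, and then applying $p$ yields $p_{\lambda+1}(X)\subseteq p(p^{-1}(\overline{X}))\subseteq\overline{X}$. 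This inclusion immediately gives the equivalence (b)$\Leftrightarrow$(c), since (c) is exactly (b) together with what has just been shown; so it remains only to prove (a)$\Leftrightarrow$(c).

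For (a)$\Rightarrow$(c), I would use that a regular epimorphism is surjective, so the chain $p_\lambda(X)$ is increasing and $X\subseteq p_\infty(X)$. The crux is that $p_\infty(X)$ is closed. Since the $p_\lambda(X)$ form an increasing transfinite chain of subsets of the fixed set $B$, the chain stabilizes at some ordinal $\lambda_0$, whence, writing $Z=p_\infty(X)$, we have $Z=p_{\lambda_0}(X)=p_{\lambda_0+1}(X)=p(\overline{p^{-1}(Z)})$. This fixed-point identity forces $\overline{p^{-1}(Z)}\subseteq p^{-1}(p(\overline{p^{-1}(Z)}))=p^{-1}(Z)$, so $p^{-1}(Z)$ is closed in $E$, and then Corollary 2.3 makes $Z$ closed in $B$. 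From $X\subseteq Z=\overline{Z}$ we obtain $\overline{X}\subseteq p_\infty(X)$, which combined with the always-true reverse inclusion gives (c).

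For (c)$\Rightarrow$(a), I would verify the two conditions of Corollary 2.3 directly. Surjectivity follows by taking $X=B$, since $B=\overline{B}=p_\infty(B)\subseteq p(E)$. For the closure condition, the inclusion $\mathcal{C}_B\subseteq\{B'\mid p^{-1}(B')\in\mathcal{C}_E\}$ holds simply because $p$ is a morphism; conversely, if $p^{-1}(B')\in\mathcal{C}_E$, then, using surjectivity, $p_1(B')=p(\overline{p^{-1}(B')})=p(p^{-1}(B'))=B'$, so $p_\lambda(B')=B'$ for all $\lambda$ and hence $\overline{B'}=p_\infty(B')=B'$ by (c), i.e.\ $B'\in\mathcal{C}_B$.

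I expect the main obstacle to be the closedness of $p_\infty(X)$ in the step (a)$\Rightarrow$(c): one must observe that the transfinite chain stabilizes, so that $p_\infty(X)$ is a genuine fixed point of the operator $X\mapsto p(\overline{p^{-1}(X)})$, and then translate this fixed-point property into closedness of the preimage $p^{-1}(p_\infty(X))$, at which point Corollary 2.3 supplies closedness of $p_\infty(X)$ itself. Everything else is either the routine induction of the first paragraph or a direct application of the already-established characterizations.
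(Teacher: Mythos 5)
Your proof is correct, and it rests on the same foundation as the paper's --- everything is funneled through Corollary~2.3 and the observation that the fixed points of $X\mapsto p(\overline{p^{-1}(X)})$ are exactly the subsets of $B$ with closed preimage --- but the decomposition is genuinely different. The paper proves (a)$\Rightarrow$(c) by introducing an auxiliary closure space $C$ on the underlying set of $B$ whose closure operator is $p_\infty$ and checking that $\mathcal{C}_C=\mathcal{C}_B$, then closes the cycle with the trivial (c)$\Rightarrow$(b) and a (b)$\Rightarrow$(a) argument that is essentially your last paragraph. You instead isolate the unconditional inclusion $p_\infty(X)\subseteq\overline{X}$ (never stated in the paper, though implicit in its identification $C=B$), which gives (b)$\Leftrightarrow$(c) for free, and you prove (a)$\Rightarrow$(c) by showing directly that $p_\infty(X)$ is a closed set containing $X$. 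A real merit of your route is that it makes explicit the stabilization of the increasing transfinite chain $p_\lambda(X)$: this is precisely what the paper's parenthetical ``all required conditions for a closure operator are obviously satisfied'' quietly relies on, since idempotence of $p_\infty$ \emph{is} the stabilization. The mild cost is that you invoke Corollary~2.3 a second time inside (a)$\Rightarrow$(c) (to pass from $p^{-1}(Z)$ closed to $Z$ closed), whereas the paper's comparison of the two closure structures handles all $X$ simultaneously. One small caveat, inherited from the paper rather than introduced by you: the surjectivity step $B=p_\infty(B)\subseteq p(E)$ in (c)$\Rightarrow$(a) is valid only if one reads the definition of $p_\infty$ as the union over $\lambda\geqslant 1$ (as the paper's remark ``$p_\infty(X)\subseteq p(E)$'' clearly intends); with $p_0(X)=X$ included in the union that inclusion would fail for non-surjective $p$.
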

\begin{proof}
	(a)$\Rightarrow$(c): Suppose $p$ is a regular epimorphism, and so
	$$\mathcal{C}_B=\{B'\subseteq B\mid p^{-1}(B')\in\mathcal{C}_E\}$$
	by Corollary 2.3. Let $C$ be the closure space whose underlying set is the same as for $B$ and whose closure operator is $p_\infty$, that is, it is defined by $\overline{X}=p_\infty(X)$ (all required conditions for a closure operator are obviously satisfied). We have
	$$X\in\mathcal{C}_C\Leftrightarrow X=p_\infty(X)\Leftrightarrow X=p_1(X)\Leftrightarrow X=p(\overline{p^{-1}(X)})\Leftrightarrow p(\overline{p^{-1}(X)})\subseteq X$$
	$$\Leftrightarrow\overline{p^{-1}(X)}\subseteq p^{-1}(X)\Leftrightarrow\overline{p^{-1}(X)}=p^{-1}(X)\Leftrightarrow p^{-1}(X)\in\mathcal{C}_E,$$
	which means that $C=B$ as closure spaces. That is, (c) holds.
	
	(c)$\Rightarrow$(b) is trivial.
	
	(b)$\Rightarrow$(a): Suppose $\overline{X}\subseteq p_\infty(X)$ for every $X\subseteq B$. First of all we have
	$$B=\overline{B}\subseteq p_\infty(B)\subseteq p(E),$$
	and so $p$ surjective. Next, take any $X\subset B$ with $p^{-1}(X)\in\mathcal{C}_E$; we have
	$$p_1(X)=p(\overline{p^{-1}(X)})=p(p^{-1}(X))=X,$$
	and then $p_\infty(X)=X$ by transfinite induction. Hence $\overline{X}\subset X$. That is, $X\in\mathcal{C}_B$ whenever $p^{-1}(X)\in\mathcal{C}_E$ and $p$ is a regular epimorphism by Corollary 2.3.
\end{proof}
Consider again the pullback diagram for $(p,\alpha)$:
\begin{prop}
	For $Z\subseteq E\times_BA$ one has $\overline{Z}=\pi_1^{-1}(\overline{\pi_1(Z)})\cap\pi_2^{-1}(\overline{\pi_2(Z)})$
\end{prop}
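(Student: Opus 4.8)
The plan is to exploit the explicit description of the closed sets of $E\times_BA$ furnished by Proposition 2.1, namely that they are precisely the sets of the form $\pi_1^{-1}(E')\cap\pi_2^{-1}(A')$ with $E'\in\mathcal{C}_E$ and $A'\in\mathcal{C}_A$, together with the fact that $\overline{Z}$ is the intersection of all such sets that contain $Z$. Writing $W=\pi_1^{-1}(\overline{\pi_1(Z)})\cap\pi_2^{-1}(\overline{\pi_2(Z)})$ for the right-hand side, I would first observe that $W$ is closed, since $\overline{\pi_1(Z)}\in\mathcal{C}_E$ and $\overline{\pi_2(Z)}\in\mathcal{C}_A$, and that $Z\subseteq W$, because $\pi_1(Z)\subseteq\overline{\pi_1(Z)}$ gives $Z\subseteq\pi_1^{-1}(\overline{\pi_1(Z)})$ and symmetrically for $\pi_2$. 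As $W$ is a closed set containing $Z$, it follows at once that $\overline{Z}\subseteq W$.

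For the reverse inclusion I would show that $W$ is contained in \emph{every} closed set containing $Z$. So let $\pi_1^{-1}(E')\cap\pi_2^{-1}(A')$, with $E'\in\mathcal{C}_E$ and $A'\in\mathcal{C}_A$, be any closed set with $Z\subseteq\pi_1^{-1}(E')\cap\pi_2^{-1}(A')$. Then $\pi_1(Z)\subseteq E'$, and since $E'$ is closed this yields $\overline{\pi_1(Z)}\subseteq E'$, whence $\pi_1^{-1}(\overline{\pi_1(Z)})\subseteq\pi_1^{-1}(E')$; arguing identically with $\pi_2$ and $A'$ gives $\pi_2^{-1}(\overline{\pi_2(Z)})\subseteq\pi_2^{-1}(A')$. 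Intersecting the two inclusions shows $W\subseteq\pi_1^{-1}(E')\cap\pi_2^{-1}(A')$. Since $\overline{Z}$ is the intersection of all such closed sets, $W\subseteq\overline{Z}$, and combining with the previous paragraph gives $\overline{Z}=W$.

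The argument is essentially bookkeeping of inclusions, so I do not expect a serious obstacle; the only points requiring care are the directions of the inclusions and the repeated use of the elementary fact that $S\subseteq S'$ with $S'$ closed forces $\overline{S}\subseteq S'$, applied once to $\pi_1(Z)$ inside $E$ and once to $\pi_2(Z)$ inside $A$. It is worth noting that nothing beyond Proposition 2.1 is used: no finiteness, surjectivity, or openness or closedness of $p$ or $\alpha$ is needed, and the proof holds verbatim for arbitrary closure spaces.
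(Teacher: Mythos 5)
Your proof is correct and follows essentially the same route as the paper's: both note that the right-hand side is closed by Proposition 2.1 and then show it is contained in every basic closed set $\pi_1^{-1}(E')\cap\pi_2^{-1}(A')$ containing $Z$, using that $\pi_i(Z)\subseteq$ a closed set forces $\overline{\pi_i(Z)}$ to be contained in it. The only difference is that you make the inclusion $Z\subseteq W$ and the resulting $\overline{Z}\subseteq W$ explicit, which the paper leaves implicit.
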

\begin{proof} Since $\pi_1^{-1}(\overline{\pi_1(Z)})\cap\pi_2^{-1}(\overline{\pi_2(Z)})\in\mathcal{C}_{E\times_BA}$, we only need to prove that if $Z\subseteq\pi_1^{-1}(E')\cap\pi_2^{-1}(A')$ for $E'\in\mathcal{C}_E$ and $A'\in\mathcal{C}_A$, then
$$\pi_1^{-1}(\overline{\pi_1(Z)})\cap\pi_2^{-1}(\overline{\pi_2(Z)})\subseteq\pi_1^{-1}(E')\cap\pi_2^{-1}(A').$$
We have
$$Z\subseteq\pi_1^{-1}(E')\cap\pi_2^{-1}(A')\Rightarrow Z\subseteq\pi_1^{-1}(E')\Rightarrow\pi_1(Z)\subseteq E'\Rightarrow\overline{\pi_1(Z)}\subseteq E',$$
where the last implication follows from $E'\in\mathcal{C}_E$. That is, we can write $\overline{\pi_1(Z)}\subseteq E'$; similarly $\overline{\pi_2(Z)}\subseteq A'$. Now, for $(e,a)\in\pi_1^{-1}(\overline{\pi_1(Z)})\cap\pi_2^{-1}(\overline{\pi_2(Z)})$, we have
$$e=\pi_1(e,a)\in\overline{\pi_1(Z)}\subseteq E'\,\,\,\text{and}\,\,\,a=\pi_2(e,a)\in\overline{\pi_2(Z)}\subseteq A',$$
and so $(e,a)\in\pi_1^{-1}(E')\cap\pi_2^{-1}(A')$, as desired.	
\end{proof}

Let $S$ be a subset of (the underlying of) a closure space $B$, and $\iota:S\to B$ the inclusion map. This makes $S$ a closure space, which we will denote by $S_B$, and which has
$$\mathcal{C}_{S_B}=\{S\cap B'\mid B'\in C_B\}\,\,\,\text{and}\,\,\,\overline{U}^{S_B}=S\cap\overline{U}^B$$
for every $U\subseteq S$. From Proposition 2.8, or directly, we easily obtain
\begin{prop}
	For a morphism $p:E\to B$ in $\mathbf{CLS}$ and a subset $S$ of $B$, the diagram
	$$\xymatrix{p^{-1}(S)_E\ar[d]_-{\kappa}\ar[r]^-{p'}&S_B\ar[d]^\iota\\E\ar[r]_-p&B},$$
	in which $\iota$ and $\kappa$ are the inclusion maps, is a pullback diagram in $\mathbf{CLS}$.\qed
\end{prop}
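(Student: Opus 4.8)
The plan is to verify the two conditions of Proposition 2.1 for the square in question, read as the pullback candidate $D=p^{-1}(S)_E$ with $\pi_1=\kappa$, $\pi_2=p'$, $\alpha=\iota$. First I would dispose of the set-theoretic part: since $\iota$ is injective, the pullback of $p$ along $\iota$ in $\mathbf{Sets}$ is carried isomorphically by $p^{-1}(S)$, with $\kappa$ the inclusion into $E$ and $p'$ the corestriction $e\mapsto p(e)$. This identification is routine. The substance of the proof is the closure-structure condition, namely
$$\mathcal{C}_{p^{-1}(S)_E}=\{\kappa^{-1}(E')\cap(p')^{-1}(S')\mid E'\in\mathcal{C}_E\,\&\,S'\in\mathcal{C}_{S_B}\}.$$

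By the subspace formula recalled just before the proposition, the left-hand side equals $\{p^{-1}(S)\cap E'\mid E'\in\mathcal{C}_E\}$. For the inclusion $\subseteq$, given $E'\in\mathcal{C}_E$ I would write $p^{-1}(S)\cap E'=\kappa^{-1}(E')\cap(p')^{-1}(S)$, using that $(p')^{-1}(S)=p^{-1}(S)$ and that $S=S\cap B\in\mathcal{C}_{S_B}$, the latter because $B=\overline{B}\in\mathcal{C}_B$ (the empty intersection). For $\supseteq$, I would unwind a typical generator: writing $S'=S\cap B'$ with $B'\in\mathcal{C}_B$, one has $\kappa^{-1}(E')=p^{-1}(S)\cap E'$ and $(p')^{-1}(S')=p^{-1}(S)\cap p^{-1}(B')$, so the generator equals $p^{-1}(S)\cap(E'\cap p^{-1}(B'))$; since $p$ is a morphism, $p^{-1}(B')\in\mathcal{C}_E$, hence $E'\cap p^{-1}(B')\in\mathcal{C}_E$ and the generator lies in the left-hand side.

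Alternatively, following the hint, one could argue at the level of closure operators via Proposition 2.8. For $U\subseteq p^{-1}(S)$ the closure of $U$ in the pullback equals $\kappa^{-1}(\overline{U}^E)\cap(p')^{-1}(S\cap\overline{p(U)}^B)$, which simplifies to $p^{-1}(S)\cap\overline{U}^E\cap p^{-1}(\overline{p(U)}^B)$. The point is that the last factor is redundant: because $p$ is a $\mathbf{CLS}$-morphism, the implication (a)$\Rightarrow$(e) of Proposition 2.4 gives $\overline{U}^E\subseteq p^{-1}(\overline{p(U)}^B)$, so the expression collapses to $p^{-1}(S)\cap\overline{U}^E=\overline{U}^{p^{-1}(S)_E}$, the subspace closure. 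Since the pullback and the subspace then carry the same closure operator on the same underlying set, they coincide.

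The only place requiring care — and hence the main, rather mild, obstacle — is precisely this absorption of the codomain-side factor coming from $S_B$: one must invoke that $p$ is a morphism to see that the constraint $(p')^{-1}(S')$ adds nothing beyond the closure inherited from $E$. Everything else is bookkeeping with preimages and the subspace formula.
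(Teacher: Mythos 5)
Your proposal is correct and follows exactly the routes the paper indicates: the paper omits the proof, saying only that the statement follows ``from Proposition 2.8, or directly,'' and you carry out both arguments soundly, with the key point in each case being that $p^{-1}(B')\in\mathcal{C}_E$ (equivalently $\overline{U}^E\subseteq p^{-1}(\overline{p(U)}^B)$) because $p$ is a morphism, so the constraint coming from $S_B$ is absorbed. Nothing further is needed.
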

\begin{prop}
	The following conditions on a morphism $p:E\to B$ in $\mathbf{CLS}$ are equivalent:
	\begin{itemize}
		\item [(a)] $p$ is a pullback stable regular epimorphism;
		\item [(b)] $\overline{X}\subseteq p(\overline{p^{-1}(X)})$ for every $X\subseteq B$;
		\item [(c)] $\overline{X}=p(\overline{p^{-1}(X)})$ for every $X\subseteq B$;
		\item [(d)] $p(\overline{p^{-1}(X)})$ is closed for every $X\subseteq B$
	\end{itemize}
\end{prop}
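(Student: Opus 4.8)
The plan is to reduce everything to the single automatic inclusion coming from $p$ being a morphism, and then to carry out the two genuinely geometric steps separately: $(c)\Rightarrow(a)$ via the pullback closure formula of Proposition 2.8, and $(a)\Rightarrow(b)$ via restriction to a carefully chosen subspace (Proposition 2.9). Throughout I write $p_1(X)=p(\overline{p^{-1}(X)})$. First I would record that Proposition 2.4(c) gives $p_1(X)\subseteq\overline{X}$ for every $X\subseteq B$. Consequently one inclusion in (b) and (c) is free: $(b)\Leftrightarrow(c)$ is immediate, and $(c)\Rightarrow(d)$ is trivial since $\overline{X}$ is closed. So the real content of (b)/(c) is that the closure of each $X$ is attained after a \emph{single} application of $p_1$.

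For $(c)\Rightarrow(a)$, I would take an arbitrary $\alpha\colon A\to B$ and show the projection $\pi_2\colon E\times_BA\to A$ is a regular epimorphism by verifying condition (b) of Proposition 2.7 in the sharper one-step form $\overline{W}^A\subseteq\pi_2(\overline{\pi_2^{-1}(W)})$. Surjectivity of $\pi_2$ is inherited from that of $p$ (take $X=B$ in (c)). For the inclusion, fix $a\in\overline{W}^A$; Proposition 2.4(d) applied to $\alpha$ gives $\alpha(a)\in\overline{\alpha(W)}^B$, and (c) applied to $\alpha(W)\subseteq B$ rewrites this as $\alpha(a)\in p(\overline{p^{-1}(\alpha(W))})$, so there is $e$ with $p(e)=\alpha(a)$ and $e\in\overline{p^{-1}(\alpha(W))}$. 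Using $\pi_1(\pi_2^{-1}(W))=p^{-1}(\alpha(W))$ and $\pi_2(\pi_2^{-1}(W))=W$ together with Proposition 2.8, the pair $(e,a)$ lands in $\overline{\pi_2^{-1}(W)}$, whence $a\in\pi_2(\overline{\pi_2^{-1}(W)})$. Since $\alpha$ was arbitrary, $p$ is a pullback stable regular epimorphism.

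The hard part is $(a)\Rightarrow(b)$, where pullback stability must genuinely be used; I would argue by contraposition. Suppose there are $X$ and $b_0\in\overline{X}\setminus p_1(X)$. The decisive move is to restrict along $S=X\cup\{b_0\}$, so that by Proposition 2.9 the induced map $p'\colon p^{-1}(S)_E\to S_B$ is a pullback of $p$, hence a regular epimorphism by (a). I then apply Corollary 2.3 to the subset $X\subseteq S$. On one side, $X$ is \emph{not} closed in $S_B$, because $\overline{X}^{S_B}=S\cap\overline{X}^B=S$ contains $b_0\notin X$. On the other side, $p^{-1}(X)$ \emph{is} closed in $p^{-1}(S)_E$: writing $E_0=\overline{p^{-1}(X)}^E$ we have $p(E_0)=p_1(X)$, so $b_0\notin p_1(X)$ forces $E_0\cap p^{-1}(b_0)=\emptyset$ and therefore $E_0\cap p^{-1}(S)=p^{-1}(X)$. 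This contradicts the characterization of regular epimorphisms in Corollary 2.3. The subtle point — and the main obstacle — is the choice $S=X\cup\{b_0\}$ rather than, say, $p_1(X)\cup\{b_0\}$: capping the dynamics within $S$ after the first step is exactly what isolates a non-closed set whose preimage is closed, and finding this test pullback is the crux of the whole proposition.

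Finally, $(c)\Rightarrow(d)$ is the trivial observation above, and for $(d)\Rightarrow(c)$ I would use surjectivity of $p$ (available once $p$ is known to be a regular epimorphism, i.e. in the presence of the other conditions): then $X\subseteq p_1(X)$, and since $p_1(X)$ is closed we get $\overline{X}\subseteq\overline{p_1(X)}=p_1(X)$, which with the automatic reverse inclusion yields equality. This closes the chain $(a)\Leftrightarrow(b)\Leftrightarrow(c)\Rightarrow(d)$ together with $(d)\Rightarrow(c)$, completing all the equivalences.
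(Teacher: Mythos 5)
Your proof is correct, and the one implication that carries real content, (a)$\Rightarrow$(b), is done by a genuinely different and somewhat more economical route than the paper's. The paper restricts along the subspace $S=X\cup(B\setminus p(\overline{p^{-1}(X)}^E))$, computes $p'_1(X)=X$ for the restricted map $p'$, invokes transfinite induction to get $p'_\infty(X)=X$, and then extracts $\overline{X}\subseteq p(\overline{p^{-1}(X)})$ from the identity $S\cap\overline{X}^B=p'_\infty(X)$ supplied by Proposition 2.7. You instead argue by contraposition with the smaller test subspace $S=X\cup\{b_0\}$ attached to a single offending point $b_0\in\overline{X}\setminus p(\overline{p^{-1}(X)})$, and apply Corollary 2.3 directly: $p^{-1}(X)=\overline{p^{-1}(X)}^E\cap p^{-1}(S)$ is closed upstairs precisely because $b_0\notin p(\overline{p^{-1}(X)})$, while $X$ is not closed in $S_B$ because $b_0\in S\cap\overline{X}^B$. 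Both choices of $S$ do the same job --- they excise from the test space everything that $p(\overline{p^{-1}(X)})$ reaches beyond $X$ --- but yours bypasses the $p'_\infty$ computation and the transfinite induction entirely. Your (c)$\Rightarrow$(a) is the paper's argument written element\-wise rather than through the chain of Beck--Chevalley identities, and (b)$\Leftrightarrow$(c), (c)$\Rightarrow$(d) are the same trivialities in both.

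One caveat, which you share with the paper rather than introduce: your (d)$\Rightarrow$(c) uses surjectivity of $p$ ``in the presence of the other conditions,'' which is circular if (d) is to imply (a)--(c) on its own. Indeed, the empty map $\emptyset\to\{*\}$ with $\mathcal{C}_{\{*\}}=\{\emptyset,\{*\}\}$ satisfies (d) but not (c), so (d) alone does not supply the needed surjectivity; the paper's one-line appeal to Proposition 2.7 for (c)$\Leftrightarrow$(d) has exactly the same defect. The clean fix is to prove (d)$\Rightarrow$(c) under the standing hypothesis that $p$ is surjective (or to add surjectivity to condition (d)), after which your argument $X\subseteq p(\overline{p^{-1}(X)})$, hence $\overline{X}\subseteq\overline{p(\overline{p^{-1}(X)})}=p(\overline{p^{-1}(X)})$, is complete.
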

\begin{proof}
	(a)$\Rightarrow$(b): Given $X\subseteq B$, consider the pullback diagram of Proposition 2.9 with $X\subseteq S\subseteq B$. Assuming (a), $p'$ must be a regular epimorphism, and, in particular,
	$$S\cap\overline{X}^B=\overline{X}^{S_B}=p'_{\infty}(X),$$
	where the second equality follows from the implication (a)$\Rightarrow$(c) of Proposition 2.7.
	We take $S=X\cup-p(Y)$ with $Y=\overline{p^{-1}(X)}^E$ and calculate:
	$$p'_1(X)=p'(\overline{p'^{-1}(X)}^{p^{-1}(S)_E})=p(\overline{p^{-1}(X)}^{p^{-1}(S)_E})=p(p^{-1}(S)\cap\overline{p^{-1}(X)}^E)$$
	$$=p(p^{-1}(S)\cap Y)=p(p^{-1}(X\cup-p(Y))\cap Y)$$
	$$=p((p^{-1}(X)\cap Y)\cup(-p^{-1}(p(Y))\cap Y))=X,$$
	where the last equality follows from $p^{-1}(X)\subseteq\overline{p^{-1}(X)}^E=Y$, $Y\subseteq p^{-1}(p(Y))$, and $p(p^{-1}(X))=X$. Since $p'_1(X)=X$, using transfinite induction we also obtain $p'_\infty(X)=X$. This gives
	$$(X\cup-p(\overline{p^{-1}(X)}^E))\cap\overline{X}^B=S\cap\overline{X}^B=p'_{\infty}(X)=X,$$
	which implies $-p(\overline{p^{-1}(X)}^E)\cap\overline{X}^B\subseteq X$. Since $X=p(p^{-1}(X))\subseteq p(\overline{p^{-1}(X)}^E)$, it follows that $\overline{X}^{B}\subseteq p(\overline{p^{-1}(X)}^{E})$, as desired.
	
	(b)$\Leftrightarrow$(c) follows from the implication (a)$\Rightarrow$(c) of Proposition 2.4, and (c)$\Leftrightarrow$(d) follows from Proposition 2.7.
	
	(c)$\Rightarrow$(a): Suppose (c) holds. We have to prove that, for every pullback diagram as in Proposition 2.8, $\pi_2$ is a regular epimorphism. Thanks to the implication (b)$\Rightarrow$(a) of Proposition 2.7, it suffices to prove that
	$$\overline{U}\subseteq\pi_2(\overline{\pi_2^{-1}(U)})$$
	for every $U\subseteq A$. We have:
	$$\overline{U}\subseteq\alpha^{-1}(\overline{\alpha(U)})\cap\overline{U}\,\,\,(\text{since}\,\,\alpha(\overline{U})\subseteq\overline{\alpha(U)})\,\,\text{gives}\,\,\overline{U}\subseteq\alpha^{-1}(\overline{\alpha(U)})$$
	$$=\alpha^{-1}(p(\overline{p^{-1}(\alpha(U))}))\cap\overline{U}\,\,\,(\text{by (c)})$$
	$$=\pi_2(\pi_1^{-1}(\overline{\pi_1(\pi_2^{-1}(U))}))\cap\overline{U}\,\,\,(\text{Beck--Chevalley Condition used twice})$$
	$$=\pi_2(\pi_1^{-1}(\overline{\pi_1(\pi_2^{-1}(U)}))\cap\pi_2^{-1}(\overline{U}))\,\,\,(\text{another Beck--Chevalley Condition})$$
	$$=\pi_2(\pi_1^{-1}(\overline{\pi_1(\pi_2^{-1}(U))})\cap\pi_2^{-1}(\overline{\pi_2(\pi_2^{-1}(U))}))=\pi_2(\overline{\pi_2^{-1}(U)}),$$
	as desired.
\end{proof}

\section{General remarks on descent}

In this section $\mathbf{C}$ denotes a category with pullbacks and coequalizers of equivalence relations. All pullback projections will denoted by $\pi$'s with suitable indices.

We will list notions and results of general descent theory in the form convenient for our purposes, not repeating any motivations and further explanations that can be found in \cite{[JT1994]} or in \cite{[JST2004]}; we will also use a particular result from \cite{[ST1992]}.

\begin{defi}
	\em{Let $p:E\to B$ be a morphism in $\mathbf{C}$. Then:}
	\begin{itemize}
		\item [(a)] A \textit{descent data} for $p$ is a triple $(C,\gamma,\xi)$ as in the diagram
		$$\xymatrix{E\times_B(E\times_BC)\ar[d]_{E\times_B\pi_2}\ar[r]^-{E\times_B\xi}&E\times_BC\ar[d]_{\xi}&C\ar[l]_-{\langle\gamma,1_C\rangle}\ar@{=}[dl]\\E\times_BC\ar[d]_{\pi_1}\ar[r]^-{\xi}&C\ar[dl]^{\gamma}\\E}$$
		(in obvious notation), which is required to commute. The category of all such triples will be denoted by $\mathrm{Des}(p)$.
		\item [(b)] The functor $K^p:(\mathbf{C}\downarrow B)\to\mathrm{Des}(p)$, defined by
		$$\xymatrix{K^p(A,\alpha)=(E\times_B(E\times_BA)\ar[r]^-{E\times_B\pi_2}&E\times_BA\ar[r]^-{\pi_1}&E)}$$
		is called the \textit{comparison functor} (for $p$).
		\item [(c)] The morphism $p$ is said to be a \textit{descent morphism} if the functor $K^p$ is fully faithful.
		\item [(d)] The morphism $p$ is said to be an \textit{effective $\mathsf{E}$-descent morphism} if the functor $K^p$ is a category equivalence.
	\end{itemize}
\end{defi}

\begin{remark}
	\em{Each of the following statements is either well known or immediately follows from well-known facts:}
	\begin{itemize}
		\item [(a)] If $(C,\gamma,\xi)$ is a descent data for $p:E\to B$, then
		$$\xymatrix{E\times_BC\ar[d]_{E\times_B\gamma}\ar@<0.5ex>[r]^-{\xi}\ar@<-0.5ex>[r]_-{\pi_2}&C\ar[d]^{\gamma}\\E\times_BE\ar@<0.5ex>[r]^-{\pi_1}\ar@<-0.5ex>[r]_-{\pi_2}&E}$$
		is a discrete fibration of equivalence relations. Moreover, sending $(C,\gamma,\xi)$ to this discrete fibration determines a category equivalence
		$$\mathrm{Des}(p)\to\mathrm{DFib}(\mathrm{Eq}(p)),$$
		where $\mathrm{DFib}(\mathrm{Eq}(p))$ is the category of discrete fibrations of equivalence relations whose codomain is
		$$\xymatrix{\mathrm{Eq}(p)=(E\times_BE\ar@<0.5ex>[r]^-{\pi_1}\ar@<-0.5ex>[r]_-{\pi_2}&E).}$$
		\item [(b)] Suppose $p$ is a regular epimorphism, and so we can assume that $B$ (equipped with $p$) is the coequalizer of the bottom equivalence relation in (a). Then sending $(C,\gamma,\xi)$ to the morphism of the coequalizers of equivalence relations in (a) determines a left adjoint $L^p$ of $K^p$.
		\item [(c)] As follows from (a) and (b), $p$ is an effective descent morphism if and only if it is a descent morphism and the functor $L^p$ reflects isomorphisms, or, equivalently, the coequalizer functor
		$$\mathrm{DFib}(\mathrm{Eq}(p))\to(\mathbf{C}\downarrow B)$$
		does so.
		\item [(d)] A morphism in $\mathbf{C}$ is a descent morphism if and only if it is a pullback stable regular epimorphism.
		\item [(e)] As easily follows from previous observations, every descent morphism in $\mathbf{C}$ is an effective descent morphism if and only if for every descent morphism $p:E\to B$ and every diagram of the form
		$$\xymatrix{E\times_BC\ar[d]_{E\times_B\gamma}\ar@<0.5ex>[r]^-{\xi}\ar@<-0.5ex>[r]_-{\pi_2}&C\ar[d]^{\gamma}\ar[r]^q&A\ar[d]^\alpha\\E\times_BE\ar@<0.5ex>[r]^-{\pi_1}\ar@<-0.5ex>[r]_-{\pi_2}&E\ar[r]_p&B}$$
		where $(C,\gamma,\xi)$ is a descent data for $p$, the top row is a coequalizer diagram, the right-hand square commutes, and $\alpha$ is an isomorphism, $\gamma$ also is an isomorphism. More generally, if $\mathbf{D}$ is a pullback stable class of morphisms containing the class of descent morphisms and satisfying the condition above (with $p$ is in $\mathbf{D}$), then $\mathbf{D}$ is contained in the class of effective descent morphisms.
		\item [(f)] A regular epimorphism $p$ in $\mathbf{CLS}$ is an effective descent morphism if and only if, for each descent data $(C,\gamma,\xi)$ for $p$, the coequalizer of $$\xymatrix{E\times_BC\ar@<0.5ex>[r]^-{\xi}\ar@<-0.5ex>[r]_-{\pi_2}&C}$$
		is a pullback stable regular epimorphism. This follows from the observation in \cite{[ST1992]} made immediately after Corollary 2.8 there.
	\end{itemize}
\end{remark}

\section{Descent for closure spaces}

In this section we go back to the category $\mathbf{CLS}$ of closure spaces and $p:E\to B$ will denote a fixed morphism there, which is a surjective map. We will also use a closure space $E'$, which has the same underlying set as $E$, and, for $X\subseteq B$, $Y\subseteq E$, and $Z\subseteq E\times_BE'$, write
$$\overline{X}=\overline{X}^B,\,\,\,\overline{Y}=\overline{Y}^E,\,\,\,\overline{Y}'=\overline{Y}^{E'},\,\,\,\text{and}\,\,\,\overline{Z}=\overline{Z}^{E\times_BE'}.$$

\begin{lem}
	Suppose the identity map $1_E:E'\to E$ is a morphism in $\mathbf{CLS}$, that is, $\overline{Y}'\subseteq\overline{Y}$ for all $Y\subseteq E$. Then the following conditions are equivalent:
	\begin{itemize}
		\item [(a)] there exists a descent data for $p$ of the form $(E',1_E,\xi)$;
		\item [(b)] there exists a unique descent data for $p$ of the form $(E',1_E,\xi)$;
		\item [(c)] the triple $(E',1_E,\pi_1)$, where $\pi_1:E\times_BE'\to E'$ is defined as the first projection, that is, by $\pi_1(e,e')=e$, is a descent data for $p$;
		\item [(d)] the first projection $\pi_1:E\times_BE'\to E'$ is a morphism in $\mathbf{CLS}$;
		\item [(e)] $\overline{Y}\cap p^{-1}(p(\overline{p^{-1}(p(Y))}'))\subseteq\overline{Y}'$ for all $Y\subseteq E$;
		\item [(f)] $\overline{Y}\cap p^{-1}(p(\overline{p^{-1}(p(Y))}'))=\overline{Y}'$ for all $Y\subseteq E$;
		\item [(g)] $\overline{Y}\cap p^{-1}(p(\overline{p^{-1}(p(Y))}'))\subseteq Y$ for all $Y\in\mathcal{C}_{E'}$;
		\item [(h)] $\overline{Y}\cap p^{-1}(p(\overline{p^{-1}(p(Y))}'))= Y$ for all $Y\in\mathcal{C}_{E'}$.
	\end{itemize}
\end{lem}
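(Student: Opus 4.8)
The plan is to treat the eight conditions as three clusters — the descent-data/continuity cluster (a)--(d), the ``all $Y$'' cluster (e),(f), and the ``closed $Y$'' cluster (g),(h) — to prove the equivalences inside each cluster almost for free, and then to bridge the clusters with two substantive arguments: the translation of continuity of the map $(e,e')\mapsto e$ into the closure inequality (g), and a monotonicity argument linking (e) and (g).

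First I would dispose of (a)$\Leftrightarrow$(b)$\Leftrightarrow$(c)$\Leftrightarrow$(d). The key observation is that, for a triple of the form $(E',1_E,\xi)$, the defining equation $\gamma\xi=\pi_1$ of Definition 3.1 with $\gamma=1_E$ forces $\xi(e,e')=e$ on underlying sets; hence $\xi$ must coincide with the map $\pi_1\colon E\times_BE'\to E'$ of (c), and is determined uniquely. One then checks that the remaining descent axioms (the normalization $\xi\langle\gamma,1_{E'}\rangle=1_{E'}$ and the cocycle condition $\xi(E\times_B\xi)=\xi(E\times_B\pi_2)$) hold automatically on underlying sets — a short direct computation using $\xi(e,e')=e$. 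Since every structural map other than $\xi$ is a morphism for formal reasons, the only genuine requirement for $(E',1_E,\pi_1)$ to be a descent data is that this $\pi_1$ be a morphism, i.e. condition (d). This yields (d)$\Rightarrow$(c)$\Rightarrow$(a), while (a)$\Rightarrow$(d) and the uniqueness in (b) follow because any such $\xi$ equals $\pi_1$ and is a morphism by hypothesis.

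Next I would record the two ``free'' equivalences (e)$\Leftrightarrow$(f) and (g)$\Leftrightarrow$(h): in each the nontrivial inclusion is the stated one, because the reverse inclusion always holds. Indeed $Y\subseteq p^{-1}(p(Y))$ gives $\overline{Y}'\subseteq\overline{p^{-1}(p(Y))}'\subseteq p^{-1}(p(\overline{p^{-1}(p(Y))}'))$, and $\overline{Y}'\subseteq\overline{Y}$ is the standing hypothesis, so $\overline{Y}'$ is contained in the left-hand intersection (and for $Y\in\mathcal{C}_{E'}$ one has $\overline{Y}'=Y$). I would then prove (e)$\Leftrightarrow$(g): the implication (e)$\Rightarrow$(g) is immediate by restricting to $Y\in\mathcal{C}_{E'}$, and for (g)$\Rightarrow$(e) I take an arbitrary $Y\subseteq E$, set $W=\overline{Y}'\in\mathcal{C}_{E'}$, and note that $\overline{Y}\subseteq\overline{W}$ while $p(Y)\subseteq p(W)$ makes the term $p^{-1}(p(\overline{p^{-1}(p(Y))}'))$ contained in the corresponding term for $W$; thus the left-hand side for $Y$ is contained in that for $W$, which by (g) lies in $W=\overline{Y}'$.

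The crux is (d)$\Leftrightarrow$(g). Here I would compute, for $W\in\mathcal{C}_{E'}$, the closure of $\pi_1^{-1}(W)=\{(e,e')\mid e\in W,\ p(e)=p(e')\}$ inside $E\times_BE'$ using Proposition 2.8. Since $\pi_1(\pi_1^{-1}(W))=W$ and the second projection of $\pi_1^{-1}(W)$ equals $p^{-1}(p(W))$, Proposition 2.8 gives $\overline{\pi_1^{-1}(W)}=\pi_1^{-1}(\overline{W})\cap\pi_2^{-1}(\overline{p^{-1}(p(W))}')$; projecting to the first coordinate, $\pi_1^{-1}(W)$ is closed in $E\times_BE'$ exactly when $\overline{W}\cap p^{-1}(p(\overline{p^{-1}(p(W))}'))\subseteq W$, which is precisely (g) for that $W$. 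Quantifying over all $W\in\mathcal{C}_{E'}$ identifies continuity of $\pi_1$ (condition (d)) with (g). The main obstacle I anticipate is the bookkeeping in this last step: one must keep the two distinct closure operators $\overline{(\cdot)}$ (in $E$) and $\overline{(\cdot)}'$ (in $E'$) straight when applying Proposition 2.8, and correctly identify the two projections of $\pi_1^{-1}(W)$, since it is easy to conflate the pullback projection $E\times_BE'\to E$ with the map $(e,e')\mapsto e$ into $E'$ whose continuity is the whole point.
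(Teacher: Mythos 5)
Your proposal is correct and follows essentially the same route as the paper: the (a)--(d) equivalences via the forced identity $\xi(e,e')=e$ on underlying sets, the free reverse inclusions giving (e)$\Leftrightarrow$(f) and (g)$\Leftrightarrow$(h), and the key computation of $\overline{\pi_1^{-1}(Y)}$ via Proposition 2.8 together with Frobenius-type identities for the projections. The only cosmetic difference is that the paper bridges the clusters by proving (d)$\Leftrightarrow$(e) directly for arbitrary $Y$ (using the characterization of morphisms in Proposition 2.4(c)), whereas you prove (d)$\Leftrightarrow$(g) for closed $W$ and then supply the monotonicity argument for (e)$\Leftrightarrow$(g) that the paper dismisses as ``straightforward''.
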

\begin{proof}
	The implications (a)$\Leftrightarrow$(b)$\Rightarrow$(c)$\Rightarrow$(d) follow from the commutativity of the bottom triangle of the diagram in 3.2(a), where $\gamma$ becomes the map $1_E:E'\to E$ in the present case. The implication (d)$\Rightarrow$(c) can be checked with a straightforward calculation and the implication (c)$\Rightarrow$(a) is trivial. Hence conditions (a)-(d) are all equivalent to each other.
	
	(d)$\Leftrightarrow$(e): As follows from the equivalence (a)$\Leftrightarrow$(c) of Proposition 2.4, condition (d) holds if and only if
	$$\pi_1(\overline{\pi_1^{-1}(Y)})\subseteq\overline{Y}'$$
	for all $Y\subseteq E$. Using Proposition 2.8, we obtain:
	$$\pi_1(\overline{\pi_1^{-1}(Y)})=\pi_1(\overline{Y\times_BE})=\pi_1(\pi_1^{-1}(\overline{\pi_1(Y\times_BE)})\cap\pi_2^{-1}(\overline{\pi_2(Y\times_BE)}'))$$
	$$=\overline{\pi_1(Y\times_BE)}\cap\pi_1(\pi_2^{-1}(\overline{\pi_2(Y\times_BE)}'))=\overline{Y}\cap\pi_1(\pi_2^{-1}(\overline{p^{-1}(p(Y))}'))$$
	$$=\overline{Y}\cap p^{-1}(p(\overline{p^{-1}(p(Y))}')),$$
	and so (d) is indeed equivalent to (e).
	
	Since $\overline{Y}'\subseteq\overline{Y}$ and $\overline{Y}'\subseteq p^{-1}(p(\overline{p^{-1}(p(Y))}'))$, we have (e)$\Leftrightarrow$(f); similarly, we have (g)$\Leftrightarrow$(h). (e)$\Leftrightarrow$(g) is also straightforward.
\end{proof}

\begin{lem}
	Suppose the equivalent conditions of Lemma 4.1 are satisfied and let us write $p'$ for $p$ considered as a morphism from $E'$ to $B$. If both $p$ and $p'$ are regular epimorphisms, then, for every $Y\in\mathcal{C}_{E'}\setminus\mathcal{C}_E$, there exists $Y^*\in\mathcal{C}_{E'}\setminus\mathcal{C}_E$ with $Y\subset Y^*$. In particular, if $\mathcal{C}_{E'}\neq\mathcal{C}_E$, then $E$ is infinite.
\end{lem}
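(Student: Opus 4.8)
The plan is to produce an explicit candidate, namely $Y^*=\overline{p^{-1}(p(Y))}'$, and to check the three required properties: $Y^*\in\mathcal{C}_{E'}$, $Y\subsetneq Y^*$, and $Y^*\notin\mathcal{C}_E$. The guiding observation, obtained by combining the two regular-epimorphism hypotheses through Corollary 2.3, is that for a $p$-saturated set $Z=p^{-1}(X)$ one has $Z\in\mathcal{C}_E\Leftrightarrow X\in\mathcal{C}_B\Leftrightarrow Z\in\mathcal{C}_{E'}$, since $\mathcal{C}_B=\{X\mid p^{-1}(X)\in\mathcal{C}_E\}=\{X\mid p^{-1}(X)\in\mathcal{C}_{E'}\}$. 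Hence no $p$-saturated set can lie in $\mathcal{C}_{E'}\setminus\mathcal{C}_E$; in particular the given $Y$ is not $p$-saturated, so, writing $S=p^{-1}(p(Y))$, we get $Y\subsetneq S$.

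With $Y^*=\overline{S}'$ the first two properties are immediate: $Y^*$ is an $E'$-closure and so lies in $\mathcal{C}_{E'}$, while $Y\subsetneq S\subseteq\overline{S}'=Y^*$ gives the strict inclusion. The entire difficulty is therefore concentrated in establishing $Y^*\notin\mathcal{C}_E$, and this is where I expect the main obstacle to lie.

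For that step I would exploit condition (h) of Lemma 4.1, which for our $Y\in\mathcal{C}_{E'}$ reads $\overline{Y}\cap p^{-1}(p(\overline{p^{-1}(p(Y))}'))=Y$, i.e. $\overline{Y}\cap p^{-1}(p(Y^*))=Y$. Since $Y\notin\mathcal{C}_E$ I may pick a witness $e_0\in\overline{Y}\setminus Y$; the displayed identity forces $e_0\notin p^{-1}(p(Y^*))$, hence $p(e_0)\notin p(Y^*)$, and therefore $e_0\notin Y^*$. On the other hand $Y\subseteq S\subseteq Y^*$ gives $\overline{Y}\subseteq\overline{Y^*}$ by monotonicity of the $E$-closure, so $e_0\in\overline{Y}\subseteq\overline{Y^*}$. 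Thus $e_0\in\overline{Y^*}\setminus Y^*$, which shows $Y^*\neq\overline{Y^*}$, i.e. $Y^*\notin\mathcal{C}_E$, as wanted. The crux is recognizing that $\overline{S}'$ is the right candidate (rather than, say, $\overline{Y}$, which is automatically $E$-closed, or the $E'$-closure of $Y\cup\{e_0\}$), and that condition (h) supplies exactly the point separating $Y^*$ from its own $E$-closure.

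Finally, for the ``in particular'' clause I would argue by iteration. Because $1_E\colon E'\to E$ is a morphism we have $\mathcal{C}_E\subseteq\mathcal{C}_{E'}$, so $\mathcal{C}_{E'}\neq\mathcal{C}_E$ means $\mathcal{C}_{E'}\setminus\mathcal{C}_E\neq\emptyset$; choosing any $Y_0$ in it and applying the first assertion repeatedly produces a strictly increasing chain $Y_0\subsetneq Y_1\subsetneq Y_2\subsetneq\cdots$ of subsets of $E$. A finite set admits no such infinite strictly ascending chain, so $E$ must be infinite.
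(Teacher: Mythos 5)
Your proof is correct and follows essentially the same route as the paper's: the same candidate $Y^*=\overline{p^{-1}(p(Y))}'$, the same use of Corollary 2.3 to show $Y$ is not $p$-saturated (hence $Y\subset p^{-1}(p(Y))$), and the same appeal to condition (h) of Lemma 4.1 for the key step $Y^*\notin\mathcal{C}_E$. The only cosmetic difference is that you exhibit a witness $e_0\in\overline{Y^*}\setminus Y^*$ directly, whereas the paper assumes $Y^*\in\mathcal{C}_E$ and derives the contradiction $\overline{Y}=Y$.
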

\begin{proof}
	For $Y\in\mathcal{C}_{E'}\setminus\mathcal{C}_E$, we have $Y\subset p^{-1}(p(Y))$. Indeed, since $p$ and $p'$ are regular epimorphisms, the equality $Y=p^{-1}(p(Y))$ would imply
	$$Y\in\mathcal{C}_{E'}\Leftrightarrow p(Y)\in\mathcal{C}_B\Leftrightarrow Y\in\mathcal{C}_E$$
	(by Corollary 2.3), which is a contradiction.
	
	Let us take
	$$Y^*=\overline{p^{-1}(p(Y))}'.$$
	We have $Y\subset Y^*$ and $Y^*\in\mathcal{C}_{E'}$. Therefore it remains to show that $Y^*$ does not belong to $\mathcal{C}_{E}$. Suppose it does. Then, since it contains $Y$ as a subset, we have $\overline{Y}\subseteq Y^*$. This gives
	$$\overline{Y}=\overline{Y}\cap Y^*=\overline{Y}\cap\overline{p^{-1}(p(Y))}'\subseteq\overline{Y}\cap p^{-1}(p(\overline{p^{-1}(p(Y))}'))=Y$$
	(the last equality here is condition (h) of Lemma 4.1), which is a contradiction since $Y$ does not belong to $\mathcal{C}_{E}$.
\end{proof}

Let $\mathbf{FCLS}$ be the category of finite closure spaces, that is, the full subcategory of $\mathbf{FCLS}$ with objects all closure spaces whose underlying sets are finite. From Remark 3.3(e) and Lemma 4.2 we obtain:

\begin{teo}
	Every descent morphism in the category $\mathbf{FCLS}$ is an effective descent morphism.\qed
\end{teo}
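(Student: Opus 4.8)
The plan is to apply the criterion of Remark 3.3(e): it suffices to prove that, for every descent morphism $p\colon E\to B$ in $\mathbf{FCLS}$ and every diagram of the form displayed there — with $(C,\gamma,\xi)$ a descent data for $p$, the top row a coequalizer, the right-hand square commutative, and $\alpha$ an isomorphism — the morphism $\gamma\colon C\to E$ is an isomorphism. Everything then comes down to identifying this situation with the one governed by Lemmas 4.1 and 4.2, and invoking finiteness.

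First I would pass to underlying sets. Since $U\colon\mathbf{CLS}\to\mathbf{Sets}$ is topological it preserves both the pullbacks and the coequalizer occurring in that diagram, so applying $U$ yields a diagram of exactly the same shape in $\mathbf{Sets}$ with $U\alpha$ a bijection. In $\mathbf{Sets}$ every surjection is an effective descent morphism, and $Up$ is surjective by Corollary 2.3; hence the very criterion of Remark 3.3(e), now read in $\mathbf{Sets}$, forces $U\gamma$ to be a bijection.

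This bijection places the problem inside the framework of Section 4. Transporting the closure structure of $C$ along $U\gamma$, I regard $C$ as a closure space $E'$ with the same underlying set as $E$, with $\gamma$ becoming the identity map $1_E\colon E'\to E$. Being a morphism, $1_E$ gives $\mathcal{C}_E\subseteq\mathcal{C}_{E'}$, that is $\overline{Y}'\subseteq\overline{Y}$ for all $Y\subseteq E$, which is the standing hypothesis of Lemma 4.1; moreover the given descent data now has the form $(E',1_E,\xi)$, so condition (a) of Lemma 4.1 holds, and with it all the equivalent conditions (a)--(h). Under these identifications $\gamma$ is an isomorphism precisely when $\mathcal{C}_{E'}=\mathcal{C}_E$. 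To apply Lemma 4.2 I must check that both $p$ and $p'$ are regular epimorphisms: $p$ is one because it is a descent morphism (Remark 3.3(d)), while $p'$ is, under the identifications above, the underlying map of $\alpha q$ with $\alpha$ an isomorphism and $q$ the coequalizer (hence a regular epimorphism) of the top row, so it too is a regular epimorphism. Lemma 4.2 then says that $\mathcal{C}_{E'}\neq\mathcal{C}_E$ would force $E$ to be infinite; since $E$ is finite we conclude $\mathcal{C}_{E'}=\mathcal{C}_E$, so $\gamma$ is an isomorphism, and Remark 3.3(e) yields the theorem.

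I expect the main obstacle to be the passage to underlying sets: being confident that $U\gamma$ is a bijection and that the several identifications — $C$ with $E'$, $\alpha q$ with $p'$, and $q$ with $p'$ as set maps — are mutually coherent. This rests on $U$ being topological, hence preserving the relevant limits and colimits, together with the exactness of $\mathbf{Sets}$; once $U\gamma$ is known to be bijective the remainder is bookkeeping that feeds directly into Lemma 4.2.
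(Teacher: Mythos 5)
Your proposal is correct and follows exactly the route the paper intends: Theorem 4.3 is stated there as an immediate consequence of Remark 3.2(e) and Lemma 4.2, and your argument is a faithful (and more detailed) expansion of that deduction, including the reduction of $\gamma$ to $1_E\colon E'\to E$ via the bijectivity of $U\gamma$ and the verification that $p$ and $p'$ are regular epimorphisms. The extra details you supply (transport of structure along $U\gamma$, identifying $p'$ with $\alpha q$) are precisely the bookkeeping the paper leaves implicit.
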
	

\section{Preorders as closure spaces}

There are full inclusions
$$\mathbf{Preord}\to\mathbf{Top}\to\mathbf{CLS},$$
where $\mathbf{Preord}$ is the category of preorders (=preordered sets) and $\mathbf{Top}$ is the category of topological spaces. Considering a preorder $B$ as either a topological space or a closure space, for any $X\subseteq B$, we have
$$\overline{X}=\,\,\uparrow\!X=\{b\in B\mid\exists_{x\in X}\,x\leqslant b\}.$$
As mentioned in Remark 2.4(b) of \cite{[CJ2020]}, not every descent morphism in $\mathbf{Preord}$ is a descent morphism in $\mathbf{Top}$; nevertheless we have:
\begin{prop}
	A morphism in $\mathbf{Preord}$ is a descent morphism in $\mathbf{Preord}$ if and only if it is a descent morphism in $\mathbf{CLS}$.
\end{prop}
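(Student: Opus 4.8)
The plan is to reduce both notions of descent to a single intrinsic \emph{arrow-lifting} condition on the monotone map $p\colon E\to B$, namely the condition (*): for every $x\leqslant b$ in $B$ there exist $e\leqslant e'$ in $E$ with $p(e)=x$ and $p(e')=b$; and then to verify separately that (*) characterizes descent morphisms in $\mathbf{CLS}$ and in $\mathbf{Preord}$. Throughout I would invoke Remark 3.3(d), so that in either category ``descent morphism'' means ``pullback-stable regular epimorphism''. The guiding observation is that for a preorder $\overline{X}=\,\uparrow\!X$, and that $\uparrow(-)$, $p(-)$ and $p^{-1}(-)$ all commute with arbitrary unions, so the closures occurring below are intrinsic to the preorders $E$ and $B$ and do not depend on the ambient category.

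For the $\mathbf{CLS}$ side I would apply Proposition 2.10 directly. Its condition (b), $\overline{X}\subseteq p(\overline{p^{-1}(X)})$ for all $X\subseteq B$, reads $\uparrow\!X\subseteq p(\uparrow\!p^{-1}(X))$; writing $\uparrow\!X=\bigcup_{x\in X}\uparrow\!x$ and pushing the union successively through $p^{-1}$, $\uparrow(-)$ and $p$, this inclusion for all $X$ is equivalent to its instance at all singletons $X=\{x\}$, which unwinds to exactly (*). Hence $p$ is a descent morphism in $\mathbf{CLS}$ if and only if (*) holds.

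For the $\mathbf{Preord}$ side I would argue both implications by hand. Assuming (*): it forces $p$ surjective, and it yields (via the evident $\mathbf{Preord}$ counterpart of Corollary 2.3) that $p$ is a regular epimorphism, since if $p^{-1}(B')$ is an up-set and $x\in B'$ with $x\leqslant b$, then lifting $x\leqslant b$ through (*) to $e\leqslant e'$ puts $b=p(e')\in B'$, so $B'$ is an up-set (the converse direction being monotonicity of $p$). Moreover (*) is inherited by every pullback projection $\pi_2\colon E\times_BA\to A$: for $a\leqslant a'$ in $A$ one has $\alpha(a)\leqslant\alpha(a')$ in $B$, and any (*)-lift $e\leqslant e'$ of that arrow gives $(e,a)\leqslant(e',a')$ in the componentwise $\mathbf{Preord}$-pullback. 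Thus $p$ is a pullback-stable regular epimorphism. Conversely, assuming $p$ is a descent morphism in $\mathbf{Preord}$ and given $x\leqslant b$ in $B$, I would pull $p$ back along the monotone map $\alpha\colon\{0\leqslant1\}\to B$ with $0\mapsto x$, $1\mapsto b$; since $\{0\}$ is not an up-set of $\{0\leqslant1\}$, regularity of $\pi_2$ forces $\pi_2^{-1}(\{0\})=\{(e,0)\mid p(e)=x\}$ not to be an up-set of $E\times_B\{0\leqslant1\}$, and any witness $(e,0)\leqslant(e',1)$ to this failure is precisely a pair $e\leqslant e'$ with $p(e)=x$, $p(e')=b$, giving (*).

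Combining the two characterizations yields the equivalence. I expect the only delicate point to be the $\mathbf{Preord}$ half: the inclusion $\mathbf{Preord}\to\mathbf{CLS}$ does \emph{not} preserve pullbacks (the $\mathbf{CLS}$-pullback of two preorders is in general not a preorder), so one cannot transport Proposition 2.10 into $\mathbf{Preord}$ or argue by preservation, and the two pullbacks must be kept carefully apart. Once one notices that (*) refers only to the orders of $E$ and $B$, both the stability of (*) under $\mathbf{Preord}$-pullbacks and the extraction of (*) from the two-element chain $\{0\leqslant1\}$ become routine. Alternatively, the $\mathbf{Preord}$ half could simply be quoted from the characterization of descent morphisms of preorders in \cite{[CJ2020]}.
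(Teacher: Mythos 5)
Your proof is correct and follows essentially the same route as the paper: both reduce the statement to the arrow-lifting condition (*) and then identify (*) with condition (b) of Proposition 2.10 using $\overline{X}=\,\uparrow\!X$. The only difference is that the paper simply quotes the characterization of descent morphisms of preorders from \cite{[JS2002]} (rather than \cite{[CJ2020]}), whereas you reprove it by hand; your direct argument, including the pullback along the two-element chain, is sound.
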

\begin{proof}
	Let $p:E\to B$ be a morphism in $\mathbf{Preord}$. As shown in \cite{[JS2002]}, $p$ is a descent morphism in $\mathbf{Preord}$ if and only if for all $b\leqslant b'$ in $B$ there exist $e\leqslant e'$ in $E$ with $p(e)=b$ and $p(e')=b'$. This, in turn, is easily equivalent to
	$$p(\overline{p^{-1}(X)})=p(\uparrow\!p^{-1}(X))\supseteq\,\uparrow\!X=\overline{X},$$
	and it remain to apply Proposition 2.10 and Remark 3.2(d).
\end{proof}

On the other hand, the result similar to Theorem 4.3 does not hold in $\mathbf{Preord}$, and not even in the category $\mathbf{FPreord}$ of finite preorders \cite{[JS2002]}. In order to clarify the phenomenon behind this, consider the following example, the simplest one in a sense:

Let $p:E\to B$ be the morphism in $\mathbf{FPreord}$, and $\alpha:A\to B$ to be the morphism in the category $\mathbf{FRR}$ of finite reflexive relations (=sets equipped with a reflexive relation) definied as follows:
\begin{itemize}
	\item $B=\{b_1,b_2,b_3\}$ is the ordered set with $b_1<b_2<b_3$.
	\item $E=\{e_1,e_{2-},e_{2+},e_3\}$ is the ordered set with $e_1<e_{2-}$, $e_{2+}<e_3$, $e_1<e_3$, and no other strict inequalities.
	\item $p(e_1)=b_1$, $p(e_{2-})=b_2=p(e_{2+})$, and $p(e_3)=b_3$.
	\item $A=B$ but with the pair $(b_1,b_3)$ removed from the relation.
	\item $\alpha$ is the identity map of $B$ considered as a morphism from $A$ to $B$.
\end{itemize}
The pullback $E'=E\times_BA$ of $p$ and $\alpha$ can be identified with the ordered set $E=\{e_1,e_{2-},e_{2+},e_3\}$ with $e_1<e_{2-}$, $e_{2+}<e_3$, and no other strict inequalities. And after that the pullback $E\times_BE'$ can be presented as the diagram
$$\xymatrix{&(e_1,e_1)\ar[dl]\\(e_{2-},e_{2-})&(e_{2-},e_{2+})\,\,\,\,\,\,\,\,\,\,\,\,\,\,\,\,(e_{2+},e_{2-})&(e_{2+},e_{2+})\ar[dl]\\&(e_3,e_3)}$$
whose vertexes are its elements and whose arrows represent strict inequalities. We observe:
\begin{itemize}
	\item [(a)] Although $A$ is not a preorder, $E'$ is. This tells us that $(E',1_E,\pi_1)$ is a descent data for $p$ in $\mathbf{FPreord}$. Comparing it with $(E,1_E,\pi_1)$ is a simple way to show that $p$ is not an effective descent morphism in $\mathbf{FPreord}$.
	\item [(b)] The set $Y=\{e_1,e_{2,-}\}$ is closed in $E'$ and its inverse image
	$$Z=\pi_1^{-1}(Y)=\{(e_1,e_1),(e_{2-},e_{2-}),(e_{2-},e_{2+})\}$$
	is closed of course in the pullback $E\times_BE'$ displayed above.
	\item [(c)] However, if we define $E\times_BE'$ as the pullback in $\mathbf{FCLS}$, then
	$$\overline{Z}=\pi_1^{-1}(\overline{\pi_1(Z)})\cap\pi_2^{-1}(\overline{\pi_2(Z)}')=\pi_1^{-1}(\overline{\{e_1,e_{2-}\}})\cap\pi_2^{-1}(\overline{\{e_1,e_{2-},e_{2+}\}}')$$
	$$=\pi_1^{-1}(\{e_1,e_{2-},e_3\})\cap\pi_2^{-1}(\{e_1,e_{2-},e_{2+},e_3\})=\pi_1^{-1}(\{e_1,e_{2-},e_3\})$$
	$$=\{(e_1,e_1),(e_{2-},e_{2-}),(e_{2-},e_{2+}),(e_3,e_3)\}\neq Z,$$
	and so $Z$ will not be closed anymore.
	\item [(d)] As follows from (c), for the pullback $E\times_BE'$ defined as in $\mathbf{FCLS}$, the map $\pi_1:E\times_BE'\to E'$ is not a morphism in $\mathbf{FCLS}$. Therefore there is no `bad' descent data $(E',1_E,\pi_1)$ in $\mathbf{FCLS}$, to prevent $p$ from being an effective descent morphism.
\end{itemize}
Of course this is only an example of one preorder argument that does not hold for closure spaces and it cannot replace the proof of Theorem 4.3, but it shows a crucial difference between the descent stories of preorders and of closure spaces.

Furthermore, in the pullback $E\times_BE'$ defined as in $\mathbf{FCLS}$, putting $Z=U\cup V$ with
$U=\{(e_1,e_1),(e_{2-},e_{2-})\}$ and $V=\{(e_{2-},e_{2+})\}$, we calculate
$$\overline{U}=\pi_1^{-1}(\overline{\pi_1(U)})\cap\pi_2^{-1}(\overline{\pi_2(U)}')=\pi_1^{-1}(\overline{\{e_1,e_{2-}\}})\cap\pi_2^{-1}(\overline{\{e_1,e_{2-}\}}')$$
$$=\pi_1^{-1}(\{e_1,e_{2-},e_3\})\cap\pi_2^{-1}(\{e_1,e_{2-}\})$$
$$=\{(e_1,e_1),(e_{2-},e_{2-}),(e_{2-},e_{2+}),(e_3,e_3)\}\cap\{(e_1,e_1),(e_{2-},e_{2-}),(e_{2+},e_{2-})\}$$
$$\{(e_1,e_1),(e_{2-},e_{2-})\}=U;$$
$$\overline{V}=\pi_1^{-1}(\overline{\pi_1(V)})\cap\pi_2^{-1}(\overline{\pi_2(V)}')=\pi_1^{-1}(\overline{\pi_1(\{(e_{2-},e_{2+})\})})\cap\pi_2^{-1}(\overline{\pi_2(\{(e_{2-},e_{2+})\})}')$$
$$=\pi_1^{-1}(\overline{\{e_{2-}\}})\cap\pi_2^{-1}(\overline{\{e_{2+}\}}')=\pi_1^{-1}(\{e_{2-}\})\cap\pi_2^{-1}(\{e_{2+},e_3\})$$
$$=\{(e_{2-},e_{2-}),(e_{2-},e_{2+})\}\cap\{(e_{2-},e_{2+}),(e_{2+},e_{2+}),(e_3,e_3)\}=\{(e_{2-},e_{2+})\}=V.$$
That is,
\begin{center}
	$\overline{U}=U$ and $\overline{V}=V$, while $\overline{U\cup V}\neq U\cup V$
\end{center}
in $E\times_BE'$ defined as the pullback in $\mathbf{FCLS}$, which is what could not happen in a preorder (since it could not happen in a topological space in general).

\section{Surjective closed and open maps are effective descent morphisms}

Returning to the context of Section 3 and using a result of \cite{[ST1992]}, we easily obtain:

\begin{teo}
	Let $U:\mathbf{C}\to\mathsf{Sets}$ be a faithful functor between categories with pullbacks and coequalizers of equivalence relations that preserves these constructions, and let $\mathcal{P}$ be a class of regular epimorphisms in $\mathbf{C}$ satisfying the following conditions:
	\begin{itemize}
		\item [(a)] $\mathcal{P}$ is pullback stable;
		\item [(b)] if $$\xymatrix{X\ar@<0.5ex>[r]^-f\ar@<-0.5ex>[r]_-g&Y\ar[r]^h&Z}$$ is a coequalizer diagram in $\mathbf{C}$ whose $U$-image is exact, that is, it is a coequalizer diagram that is also a kernel pair diagram, then $f,g\in\mathcal{P}\Rightarrow h\in\mathcal{P}$.
	\end{itemize}
	Then $\mathcal{P}$ is contained in the class of effective descent morphisms in $\mathbf{C}$.
\end{teo}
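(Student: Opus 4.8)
The plan is to reduce the statement to the characterization of effective descent morphisms due to \cite{[ST1992]} (the observation recalled, in the special case of $\mathbf{CLS}$, in Remark 3.2(f)), and then to extract its hypothesis from conditions (a) and (b) by transporting the relevant diagrams through $U$.

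First I would note that every $p\in\mathcal{P}$ is already a descent morphism: it is a regular epimorphism by the definition of $\mathcal{P}$ and pullback stable by (a), hence a pullback stable regular epimorphism, which is a descent morphism by Remark 3.2(d). By the \cite{[ST1992]} criterion, such a $p$ is an effective descent morphism as soon as, for every descent datum $(C,\gamma,\xi)$ for $p$, the coequalizer $q\colon C\to A$ of the equivalence relation $\xi,\pi_2\colon E\times_BC\rightrightarrows C$ occurring as the top row of the discrete fibration in Remark 3.2(a) is a pullback stable regular epimorphism. Since $\mathcal{P}$ consists of regular epimorphisms and is pullback stable by (a), it suffices to prove that $q\in\mathcal{P}$ for each such datum.

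To produce $q\in\mathcal{P}$ I would apply condition (b) to the coequalizer diagram $E\times_BC\rightrightarrows C\xrightarrow{q}A$, for which two things must be checked. First, both parallel maps lie in $\mathcal{P}$: the projection $\pi_2\colon E\times_BC\to C$ is the pullback of $p$ along $p\gamma\colon C\to B$, so $\pi_2\in\mathcal{P}$ by (a); and the left-hand square of the diagram in Remark 3.2(a), which is a pullback because that morphism of equivalence relations is a discrete fibration, exhibits $\xi$ as a pullback of the kernel pair projection $\pi_1\colon E\times_BE\to E$, itself a pullback of $p$, whence $\xi\in\mathcal{P}$ by (a) as well. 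Second, the $U$-image of the diagram is exact: since $U$ preserves pullbacks it carries the internal equivalence relation $E\times_BC\rightrightarrows C$ to an equivalence relation in $\mathsf{Sets}$, and since it preserves coequalizers of equivalence relations it carries $q$ to the coequalizer of that relation; because equivalence relations in $\mathsf{Sets}$ are effective, the image relation is exactly the kernel pair of $U(q)$, so the $U$-image is at once a coequalizer and a kernel pair diagram. Condition (b) then gives $q\in\mathcal{P}$, which completes the reduction.

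I expect the main obstacle to be the verification that $\xi\in\mathcal{P}$: unlike $\pi_2$, the map $\xi$ is not directly a projection of a pullback of $p$, so one must use the discrete fibration property of the descent datum---that the relevant square in Remark 3.2(a) is a pullback---to realize $\xi$ as a pullback of a kernel pair projection of $p$ before invoking pullback stability. A secondary point deserving care is the joint monicity of the $U$-image relation, which does not follow from preservation of products (not assumed) but can be read off from the identity $\gamma\xi=\pi_1$ of the descent datum (Definition 3.1) together with $U$ preserving the pullback $E\times_BC$; the remaining reflexivity, symmetry and transitivity, and the final appeal to \cite{[ST1992]}, are then routine.
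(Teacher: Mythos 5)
Your proposal is correct and follows essentially the same route as the paper: reduce to the criterion of \cite{[ST1992]} (Corollary 2.8 and the remark after it, as recalled in Remark 3.2(f)), verify that the $U$-image of the coequalizer diagram is exact using the exactness of $\mathsf{Sets}$ and the preservation properties of $U$, and use pullback stability (a) to place $\pi_2$ and $\xi$ in $\mathcal{P}$ so that (b) applies. The paper states the membership of $\xi$ in $\mathcal{P}$ more tersely; your explicit use of the discrete fibration square to realize $\xi$ as a pullback of a kernel-pair projection of $p$ is exactly the intended justification.
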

\begin{proof}
	As follows from (a) and the fact that $\mathcal{P}$ is a class of regular epimorphisms in $\mathbf{C}$, $\mathcal{P}$ is a class of pullback stable regular epimorphisms in $\mathbf{C}$. Note also that, for every descent data $(C,\gamma,\xi)$ over a given $p:E\to B$ in $\mathcal{P}$, we have
	\begin{itemize}
		\item since $U(p)$ being a regular epimorphism is an effective descent morphism in $\mathsf{Sets}$, the $U$-image of the coequalizer diagram $$\xymatrix{E\times_BC\ar@<0.5ex>[r]^-{\xi}\ar@<-0.5ex>[r]_-{\pi_2}&C\ar[r]^q&A}$$
		is exact;
		\item as follows from (a), the morphisms $\xi$ and $\pi_2$ in that diagram belong to $\mathcal{P}$.
	\end{itemize}	
	After that all we need is to apply the categorical counterpart of Corollary 2.8 in \cite{[ST1992]}, as the next sentence (after Corollary 2.8) in \cite{[ST1992]} shows.
\end{proof}

By a \textit{closed map} we mean a morphism $\mathbf{CLS}$ that is closed, or, equivalently, satisfies the equivalent conditions of Proposition 2.5. Similarly, by an \textit{open map} we mean a morphism $\mathbf{CLS}$ that is open, or, equivalently, satisfies the equivalent conditions of Proposition 2.6. In the rest this section we will show that Theorem 6.1  applies to the classes of surjective closed maps and of surjective open maps in $\mathbf{CLS}$.

\begin{prop}
	The class of closed maps is pullback stable. In particular, so is the class of surjective closed maps.
\end{prop}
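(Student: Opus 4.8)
The plan is to reduce everything to the explicit closure formula for pullbacks (Proposition 2.8) together with the characterization of closedness in terms of the closure operator (Proposition 2.5). Fix a closed morphism $p\colon E\to B$ and an arbitrary $\alpha\colon A\to B$, and form the pullback diagram for $(p,\alpha)$. What must be shown is that the projection $\pi_2\colon E\times_BA\to A$, which is the pullback of $p$, is closed. Since $\pi_2$ is automatically a morphism in $\mathbf{CLS}$, the equivalence (a)$\Leftrightarrow$(b) of Proposition 2.5 tells us it suffices to prove $\overline{\pi_2(Z)}\subseteq\pi_2(\overline{Z})$ for every $Z\subseteq E\times_BA$.

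First I would compute $\pi_2(\overline{Z})$ directly. Writing $Y=\pi_1(Z)$ and $U=\pi_2(Z)$, Proposition 2.8 gives $\overline{Z}=\pi_1^{-1}(\overline{Y})\cap\pi_2^{-1}(\overline{U})$. Since a pair $(e,a)$ lies in this set exactly when $e\in\overline{Y}$ and $a\in\overline{U}$ (the constraint $p(e)=\alpha(a)$ being automatic for points of the pullback), projecting onto the second coordinate yields
$$\pi_2(\overline{Z})=\overline{U}\cap\alpha^{-1}(p(\overline{Y})).$$
Thus the desired inclusion $\overline{U}\subseteq\pi_2(\overline{Z})$ reduces to the single inclusion $\overline{U}\subseteq\alpha^{-1}(p(\overline{Y}))$, that is, to $\alpha(\overline{U})\subseteq p(\overline{Y})$.

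This last inclusion is where closedness and the pullback condition enter, and it is the only non-formal point. Because $Z$ is a subset of the pullback, every $(e,a)\in Z$ satisfies $\alpha(a)=p(e)$, from which one reads off $\alpha(U)=p(Y)$, both sides being the set of common images of the pairs in $Z$. Since $p$ is closed, Proposition 2.5(c) gives $p(\overline{Y})=\overline{p(Y)}$; and since $\alpha$ is a morphism, Proposition 2.4(d) gives $\alpha(\overline{U})\subseteq\overline{\alpha(U)}$. Chaining these, $\alpha(\overline{U})\subseteq\overline{\alpha(U)}=\overline{p(Y)}=p(\overline{Y})$, which is exactly what was needed. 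Hence $\pi_2(\overline{Z})=\overline{U}=\overline{\pi_2(Z)}$, so $\pi_2$ is closed.

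Finally, for the \emph{in particular} clause I would note that surjective maps are pullback stable in $\mathbf{CLS}$: by Proposition 2.1 the underlying-set functor preserves pullbacks, and surjections are pullback stable in $\mathbf{Sets}$, so the pullback of a surjection is again surjective. Combining this with the stability of closed maps just established gives pullback stability of surjective closed maps. The main obstacle is really just the middle step, namely recognizing that the spurious factor $\alpha^{-1}(p(\overline{Y}))$ in the formula for $\pi_2(\overline{Z})$ is harmless; this hinges on the equality $\alpha(U)=p(Y)$ forced by the pullback together with the continuity of $\alpha$. Everything else is bookkeeping with Propositions 2.5 and 2.8.
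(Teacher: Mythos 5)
Your proof is correct, and it rests on the same key fact as the paper's, namely the image formula $\pi_2\bigl(\pi_1^{-1}(E')\cap\pi_2^{-1}(A')\bigr)=\alpha^{-1}(p(E'))\cap A'$ combined with closedness of $p$ and continuity of $\alpha$. The only difference is one of packaging: the paper verifies condition (a) of Proposition 2.5 directly on the basic closed sets $\pi_1^{-1}(E')\cap\pi_2^{-1}(A')$ supplied by Proposition 2.1, whereas you verify condition (b) via the closure formula of Proposition 2.8, which obliges you to add the (correct) observation that $\alpha(\pi_2(Z))=p(\pi_1(Z))$ for $Z$ in the pullback.
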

\begin{proof}
	Consider the pullback for $(p,\alpha)$ with closed $p$. We have to prove that the map $\pi_2:E\times_BA\to A$ is closed. However, this follows from Proposition 2.1 and the fact that we have $$\pi_2(\pi_1^{-1}(E')\cap\pi_2^{-1}(A'))=\alpha^{-1}(p(E'))\cap A'$$ for all $E'\subseteq E$ and $A'\subseteq A$. Indeed, if $E'$ is closed in $E$ and $A'$ is closed in $A$, then $\alpha^{-1}(p(E'))\cap A'$ is closed in $A$ since $p$ is a closed map.
\end{proof}
\begin{prop}
	The class of surjective open maps is pullback stable.
\end{prop}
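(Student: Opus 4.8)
The plan is to follow the pattern of Proposition 6.2, but phrased in terms of complements, since openness is a statement about images of open (rather than closed) sets. Consider the pullback diagram for $(p,\alpha)$ with $p$ a surjective open map; I must show that $\pi_2:E\times_BA\to A$ is again a surjective open map. Surjectivity is immediate: by Proposition 2.1 the functor $U$ preserves pullbacks, so the underlying map of $\pi_2$ is the pullback in $\mathbf{Sets}$ of the surjection $U(p)$ along $U(\alpha)$, and the pullback of a surjection in $\mathbf{Sets}$ is a surjection. Concretely, for $a\in A$ one picks $e\in E$ with $p(e)=\alpha(a)$, so that $(e,a)\in E\times_BA$ and $\pi_2(e,a)=a$.

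For openness I would use the reformulation in Proposition 2.6(a): $\pi_2$ is open precisely when it sends open subsets (complements of closed sets) to open subsets. By Proposition 2.1 the closed subsets of $E\times_BA$ are exactly the sets $\pi_1^{-1}(E')\cap\pi_2^{-1}(A')$ with $E'\in\mathcal{C}_E$ and $A'\in\mathcal{C}_A$; taking complements, every open subset has the form $O=\pi_1^{-1}(U)\cup\pi_2^{-1}(V)$ with $U=-E'$ open in $E$ and $V=-A'$ open in $A$. The key step is the set-theoretic identity
$$\pi_2(\pi_1^{-1}(U)\cup\pi_2^{-1}(V))=\alpha^{-1}(p(U))\cup V,$$
in which $\pi_2(\pi_1^{-1}(U))=\alpha^{-1}(p(U))$ holds for the same reason as the image formula established in the proof of Proposition 6.2, and $\pi_2(\pi_2^{-1}(V))=V$ uses the surjectivity of $\pi_2$ obtained above. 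Since $p$ is open and $U$ is open, $p(U)$ is open in $B$; since $\alpha$ is a morphism, $\alpha^{-1}(p(U))$ is open in $A$; and a union of two open sets is open, its complement being the intersection of two closed sets. Hence $\pi_2(O)$ is open, which is what is needed.

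The one delicate point --- and the reason surjectivity cannot be dropped --- is the equality $\pi_2(\pi_2^{-1}(V))=V$. Without surjectivity one obtains only $\pi_2(\pi_2^{-1}(V))=V\cap\alpha^{-1}(p(E))$, and $\alpha^{-1}(p(E))$ need not be open, because openness of $p(E)$ would amount to openness of the total set $E$, i.e.\ to $-E=\emptyset$ being closed, which in a closure space may fail. This is exactly where the closure-space situation diverges from the topological one, and it accounts for the fact that, in contrast with the closed case of Proposition 6.2, there is no accompanying claim that open maps alone are pullback stable.
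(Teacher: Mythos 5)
Your proof is correct, but it takes a different route from the paper's. The paper verifies condition (d) of Proposition 2.6 for $\pi_2$: using the closure formula of Proposition 2.8 it computes $\overline{\pi_2^{-1}(U)}=\pi_1^{-1}(\overline{p^{-1}(\alpha(U))})\cap\pi_2^{-1}(\overline{U})$, applies openness of $p$ in the form $\overline{p^{-1}(\alpha(U))}=p^{-1}(\overline{\alpha(U)})$, and after a Beck--Chevalley rewrite and the inclusion $\overline{U}\subseteq\alpha^{-1}(\overline{\alpha(U)})$ concludes $\overline{\pi_2^{-1}(U)}=\pi_2^{-1}(\overline{U})$ (surjectivity enters through $\pi_2(\pi_2^{-1}(U))=U$). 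You instead verify condition (a) directly: by Proposition 2.1 every open subset of $E\times_BA$ is $\pi_1^{-1}(U)\cup\pi_2^{-1}(V)$ with $U$, $V$ open, and $\pi_2(\pi_1^{-1}(U)\cup\pi_2^{-1}(V))=\alpha^{-1}(p(U))\cup V$ is a union of two open sets, hence open since $\mathcal{C}_A$ is closed under (finite) intersections. Your computation of images is right, surjectivity is used in exactly the right place ($\pi_2(\pi_2^{-1}(V))=V$), and your closing remark correctly identifies why plain open maps fail to be pullback stable here --- the total set $E$ need not be open in a closure space since $\emptyset$ need not be closed, which matches the counterexample in Section 7.3. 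Your version is arguably more elementary (pure set-level image computations on the explicitly described opens), whereas the paper's version stays uniform with the style of the neighbouring proofs by working with the closure operator; the two arguments buy the same result at essentially the same cost.
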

\begin{proof}
	Consider the pullback for $(p,\alpha)$ with open $p$. We have to prove that the map $\pi_2:E\times_BA\to A$ is open. For $U\subseteq A$, we have
	$$\overline{\pi_2^{-1}(U)}=\pi_1^{-1}(\overline{\pi_1(\pi_2^{-1}(U))})\cap\pi_2^{-1}(\overline{\pi_2(\pi_2^{-1}(U))})$$
	$$=\pi_1^{-1}(\overline{p^{-1}(\alpha(U))})\cap\pi_2^{-1}(\overline{U})=\pi_1^{-1}(p^{-1}(\overline{\alpha(U)}))\cap\pi_2^{-1}(\overline{U})$$
	$$=\pi_2^{-1}(\alpha^{-1}(\overline{\alpha(U)}))\cap\pi_2^{-1}(\overline{U})=\pi_2^{-1}(\alpha^{-1}(\overline{\alpha(U)})\cap\overline{U})$$
	and since
	$$\overline{U}\subseteq\alpha^{-1}(\alpha(\overline{U}))\subseteq\alpha^{-1}(\overline{\alpha(U)}),$$
	this gives $\overline{\pi_2^{-1}(U)}=\pi_2^{-1}(\overline{U})$.
	Therefore $\pi_2$ is open by Proposition 2.6.
\end{proof}
\begin{prop}
	The classes of surjective closed maps and of surjective open maps both satisfy condition (b) of Theorem 6.1 for $U$ being the forgetful functor $\mathbf{CLS}\to\mathsf{Sets}$.
\end{prop}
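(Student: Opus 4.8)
The plan is to verify condition (b) of Theorem 6.1 directly for each of the two classes, using the exactness of the $U$-image to reduce the whole matter to a single set-theoretic identity. Write the given coequalizer diagram as $X\rightrightarrows Y\to Z$ with maps $f,g:X\to Y$ and $h:Y\to Z$. Since its $U$-image is exact, the pair $(f,g)$ is the kernel pair of $h$ in $\mathsf{Sets}$; identifying $X$ with $Y\times_ZY$ and $f,g$ with the two projections, I would first record the identity
$$h^{-1}(h(Y'))=f(g^{-1}(Y'))\quad\text{for every }Y'\subseteq Y,$$
which holds because $y\in h^{-1}(h(Y'))$ means exactly that $h(y)=h(y')$ for some $y'\in Y'$, i.e. that $(y,y')\in X$ with $g(y,y')=y'\in Y'$ and $f(y,y')=y$. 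Moreover $h$, being a coequalizer, is a regular epimorphism, so Corollary 2.3 applies: a subset $Z'\subseteq Z$ lies in $\mathcal{C}_Z$ if and only if $h^{-1}(Z')\in\mathcal{C}_Y$. Note also that $h$ is surjective, since its $U$-image is a set-theoretic coequalizer.

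For the class of surjective closed maps it then remains to show $h$ is closed. Given $Y'\in\mathcal{C}_Y$, the set $g^{-1}(Y')$ lies in $\mathcal{C}_X$ because $g$ is a morphism of $\mathbf{CLS}$, and then $f(g^{-1}(Y'))\in\mathcal{C}_Y$ because $f$ is a closed map (Proposition 2.5). By the identity above this says $h^{-1}(h(Y'))\in\mathcal{C}_Y$, whence $h(Y')\in\mathcal{C}_Z$ by Corollary 2.3, proving that $h$ is closed and hence a surjective closed map.

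For the class of surjective open maps the argument is the complementary one. Let $Y'\subseteq Y$ have $-Y'\in\mathcal{C}_Y$; then $g^{-1}(-Y')=-g^{-1}(Y')\in\mathcal{C}_X$ since $g$ is a morphism, so $g^{-1}(Y')$ is open in $X$, and hence $f(g^{-1}(Y'))$ is open in $Y$ since $f$ is an open map (Proposition 2.6). Using the identity above, this means $-h^{-1}(h(Y'))=h^{-1}(-h(Y'))\in\mathcal{C}_Y$, so $-h(Y')\in\mathcal{C}_Z$ by Corollary 2.3, i.e. $h$ is open and hence a surjective open map.

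The content of the proposition really reduces to the single identity $h^{-1}(h(Y'))=f(g^{-1}(Y'))$, so the only genuine point to get right is the translation of exactness of the $U$-image into the kernel-pair description of $(f,g)$; after that both cases are the same bookkeeping, with the open case obtained from the closed one by passing to complements. I expect no real obstacle beyond keeping the complements straight in the open case.
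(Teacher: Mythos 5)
Your proof is correct and follows essentially the same route as the paper's: both hinge on the identity $h^{-1}(h(Y'))=f(g^{-1}(Y'))$ extracted from exactness of the $U$-image, followed by the chain ``$Y'$ closed (resp.\ open) $\Rightarrow g^{-1}(Y')$ closed (open) $\Rightarrow f(g^{-1}(Y'))=h^{-1}(h(Y'))$ closed (open) $\Rightarrow h(Y')$ closed (open)'' via Corollary 2.3. The only difference is that you spell out the kernel-pair derivation of the identity and the complement bookkeeping in the open case, which the paper leaves implicit.
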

\begin{proof}
	Consider the diagram of 6.1(b). At the level of underlying sets, the diagram $$\xymatrix{X\ar[d]_f\ar[r]^-g&Y\ar[d]^h\\Y\ar[r]_h&Z}$$ is a pullback, and so, for each subset $Y'$ of $Y$, we have $h^{-1}(h(Y'))=f(g^{-1}(Y'))$. Since $h$ is a regular epimorphism, for closed $f$ this gives:
	\begin{center}
		$Y'$ is closed $\Rightarrow$ $g^{-1}(Y')$ is closed $\Rightarrow$ $f(g^{-1}(Y'))$ is closed\\
		$h^{-1}(h(Y'))$ is closed $\Rightarrow$ $h(Y')$ is closed,
	\end{center}
	and, similarly, for open $f$:
	\begin{center}
		$Y'$ is open $\Rightarrow$ $g^{-1}(Y')$ is open $\Rightarrow$ $f(g^{-1}(Y'))$ is open\\
		$h^{-1}(h(Y'))$ is open $\Rightarrow$ $h(Y')$ is open,
	\end{center}
	as desired.
\end{proof}

From Theorem 6.1 and these three propositions, as promised, we obtain:

\begin{teo}
	Every surjective closed map and every surjective open map in $\mathbf{CLS}$ is an effective descent morphism.\qed
\end{teo}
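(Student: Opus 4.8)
The plan is to obtain the theorem as an immediate application of Theorem 6.1, run twice: once with $\mathcal{P}$ the class of surjective closed maps and once with $\mathcal{P}$ the class of surjective open maps, taking $\mathbf{C}=\mathbf{CLS}$ and $U\colon\mathbf{CLS}\to\mathsf{Sets}$ the forgetful functor. First I would confirm the standing hypotheses on $U$: it is faithful by definition, and it preserves pullbacks and coequalizers of equivalence relations because, by Propositions 2.1 and 2.2, such (co)limits in $\mathbf{CLS}$ have their $U$-images as the corresponding (co)limits in $\mathsf{Sets}$.

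Next I would verify that each of the two classes genuinely consists of regular epimorphisms, since this is required of $\mathcal{P}$ in Theorem 6.1 but is not literally recorded in Propositions 6.2--6.4. For a surjective closed $p\colon E\to B$ and $B'\subseteq B$ with $p^{-1}(B')\in\mathcal{C}_E$, closedness gives $p(p^{-1}(B'))\in\mathcal{C}_B$ and surjectivity gives $p(p^{-1}(B'))=B'$, so $B'\in\mathcal{C}_B$; by Corollary 2.3, $p$ is a regular epimorphism. For a surjective open $p$, condition (d) of Proposition 2.6 yields $p^{-1}(\overline{B'})=\overline{p^{-1}(B')}=p^{-1}(B')$ whenever $p^{-1}(B')\in\mathcal{C}_E$, and since surjectivity makes $S\mapsto p^{-1}(S)$ injective on subsets of $B$ this forces $\overline{B'}=B'$, i.e.\ $B'\in\mathcal{C}_B$; again Corollary 2.3 applies.

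With this in hand, the two hypotheses of Theorem 6.1 are exactly the preceding propositions: condition (a), pullback stability, is Proposition 6.2 for surjective closed maps and Proposition 6.3 for surjective open maps, while condition (b) is Proposition 6.4, proved there for both classes at once. Theorem 6.1 then places each class inside the class of effective descent morphisms, which is precisely the assertion. I do not expect a genuine obstacle here, as all the content has been front-loaded into Theorem 6.1 and Propositions 6.2--6.4; the only step needing explicit care is the regular-epimorphism check just described, which I would not leave implicit.
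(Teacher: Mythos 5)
Your proposal is correct and follows exactly the paper's route: Theorem 6.5 is deduced from Theorem 6.1 together with Propositions 6.2, 6.3 and 6.4, applied separately to the two classes. Your explicit verification that surjective closed maps and surjective open maps are regular epimorphisms (via Corollary 2.3) is a sound and welcome addition, since the paper leaves that hypothesis of Theorem 6.1 implicit.
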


\section{Final remarks}

\textbf{7.1.} For a morphism $p:E\to B$ in $\mathbf{CLS}$, which is surjective, let us call a subset $Y$ of $E$ \textit{saturated} if it is of the form $Y=p^{-1}(X)$ for some $X\subseteq B$, or, equivalenly, if $Y=p^{-1}(p(Y))$. Consider the following conditions on $p$:
\begin{itemize}
	\item [(a)] $p(Y)$ is closed whenever $Y$ is saturated and closed, or, equivalently (by Corollary 2.3), $p$ is a regular epimorphism in $\mathbf{CLS}$;
	\item [(b)] $p(Y)$ is closed whenever $Y$ is the closure of a saturated subset, or, equivalently (by Proposition 2.10), $p$ is a pullback stable regular epimorphism (=descent morphism) in $\mathbf{CLS}$;
	\item [(c)] $p$ is an effective descent morphism in $\mathbf{CLS}$.
	\item [(d)] $p(Y)$ is closed whenever so is $Y$.
\end{itemize}
We have (d)$\Rightarrow$(c) (Theorem 6.5) and trivial implications (c)$\Rightarrow$(b)$\Rightarrow$(a). It seems that none of the opposite implications holds. In fact it is very easy to construct counterexamples for (a)$\Rightarrow$(b) and, using Theorem 6.5 for (c)$\Rightarrow$(d), but we have no counterexamples for (b)$\Rightarrow$(c).\vspace{1mm}

\textbf{7.2.} For a monad $T$ on the category of sets, consider the forgetful functor $$U:\mathsf{Alg}(T)\to\mathbf{CLS}.$$ The category $\mathsf{Alg}(T)$ is Barr exact and, for a morphism $p$ in it, we have
\begin{center}
	$p$ in an effective descent morphism $\Leftrightarrow$ $p$ is a surjective map,
\end{center}
and the functor $U$ sends all morphisms of $\mathsf{Alg}(T)$ to closed maps; in particular it preserves regular epimorphisms, descent morphisms, and effective descent morphisms. However, it obviously does not preserve kernel pairs of non-injective maps.\vspace{1mm}

\textbf{7.3.} Let $\mathbf{E}$ be one of the following three classes of morphisms in $\mathbf{CLS}$: (i) of closed maps; (ii) of surjective closed maps; (iii) of surjective open maps. As follows from Propositions 6.2 and 6.3 (and simple arguments used in the proof of Proposition 6.4), every effective descent morphism in $\mathbf{CLS}$ is also an effective $\mathbf{E}$-descent morphism. And it is obvious that every descent morphism in $\mathbf{CLS}$ is also an $\mathbf{E}$-descent morphism. However, none of these assertions is true for the class of (all) open maps. Indeed, consider the pullback diagram $$\xymatrix{\{-1,1\}\ar[d]_\beta\ar[r]^-q&\{1\}\ar[d]^\alpha\\\{-2,-1,1,2\}\ar[r]_-p&\{1,2\}}$$ in which:
\begin{itemize}
	\item $\{-2,-1,1,2\}$ has five closed subsets; apart from itself and the empty set they are $\{-2,2\}$, $\{1,2\}$, and \{2\}.
	\item $\{1,2\}$ has three closed subsets; apart from itself and the empty set it is just the set $\{2\}$.
	\item $p$ is defined by $p(k)=|k|$.
	\item $\alpha$ and $\beta$ are the inclusion maps, $q$ is induced by $p$, and the closure space structures on the top are induced by the bottom ones; that is, $$\mathcal{C}_{\{1\}}=\{\emptyset,\{1\}\},\,\,\,\mathcal{C}_{\{-1,1\}}=\{\emptyset,\{1\},\{-1,1\}\}$$ (this makes $\{-1,1\}$ isomorphic to $\{1,2\}$, but that is not relevant for our purposes).
\end{itemize}
It is easy to check that $p$ and $\alpha$ are open maps; furthermore, since $p$ is sujective, it is an effective descent morphism. On the other hand, $\beta$ is not open since $\{-1\}$ is open in $\{-1,1\}$ but not in $\{-2,-1,1,2\}$, and so the pullback functor along $p$ is not even well defined for the class of all open maps.

In spite of all this, a complete characterization of effective $\mathbf{E}$-descent morphisms remains an open question for $\mathbf{E}$ being any of the four classes of morphisms that appear in this subsection. Of course in the `forth case', that is, when $\mathbf{E}$ is the class of open maps, one should suitably reformulate the problem first characterizing those $p:E\to B$ in $\mathbf{CLS}$ for which the above-mentioned pullback functor \textit{is} well defined.

\end{document}